\newcommand{\Z}{\mathbb Z}
\newtheorem{Theorem}{Theorem}[section]
\newtheorem{Proposition}{Proposition}[section]
\newtheorem{Remark}{Remark}[section]
\numberwithin{equation}{section}
\numberwithin{figure}{section}
\title{On the inverse braid monoid}
\author[Vershinin]{V.~V.~Vershinin}
\address{D\'epartement des Sciences Math\'ematiques,
                                     Universit\'e Montpellier II,
Place Eug\'ene Bataillon,
34095 Montpellier cedex 5, France}
\email{ vershini@math.univ-montp2.fr}
\address{ Sobolev Institute of Mathematics, Novosibirsk, 630090,
Russia }
\email{ versh@math.nsc.ru}
\subjclass[2000]{Primary 20F36; Secondary 20F38, 57M}
\keywords{Braid, inverse braid monoid, presentation, 
 singular braid monoid, word problem}
\thanks{The author was supported  in part by the
 by CNRS-NSF grant No~17149 and INTAS grant No~03-5-3251.}
\begin{document}
\begin{abstract}
Inverse braid monoid describes a structure on braids where the number of strings
is not fixed. So, some strings of initial $n$  may be deleted.
In the paper we show that many properties and objects based on braid
groups may be extended to the inverse braid monoids.
Namely we prove an inclusion into a  monoid of partial monomorphisms of 
a free group. This gives a solution of the word problem. Another solution
is obtained by an approach similar to that of Garside.
 We give also the analogues of Artin presentation
with two generators and Sergiescu graph-presentations.
\end{abstract}
\maketitle
\tableofcontents
\section{Introduction}

The notion of {\it inverse semigroup} was introduced by V.~V.~Wagner in 1952 \cite{Wag}.
By definition it means that for any element $a$ of a semigroup (monoid)
$M$ there exists a unique element $b$ (which is called {\it inverse}) such that 
\begin{equation}
a = aba
\label{eq:reg_v_n}
\end{equation}
 and 
\begin{equation}
b = bab.
\label{eq:inv}
\end{equation}
The roots of this notion can be seen in the von Neumann regular rings \cite{v_N} 
where only one condition (\ref{eq:reg_v_n}) holds for non necessary unique $b$,
or in the Moore-Penrose pseudoinverse for matrices \cite{Mo}, \cite{Pen}
where both conditions
(\ref{eq:reg_v_n}) and (\ref{eq:inv}) hold (and certain supplementary conditions also).

The typical example of an inverse monoid is a monoid of partial (defined on a subset)
injections of a set. For a finite set this gives us the notion of a
{\it symmetric inverse monoid } $I_n$ which generalises and includes the classical
symmetric group $\Sigma_n$. A presentation of symmetric inverse monoid was
obtained by L.~M.~Popova \cite{Po}, see also formulas 
(\ref{eq:brelations}), (\ref{eq:invbrelations}\,-\ref{eq:syminvrelations})
below.
Recently the {\it inverse braid monoid }  $IB_n$ was constructed by 
D.~Easdown and T.~G.~Lavers \cite{EL}. 
It arises from 
a very natural operation on braids: deleting one or several strings.
By the application of this procedure to braids in
$Br_n$ we get {\it partial} braids  \cite{EL}.
The multiplication of partial braids is shown at the Figure~\ref{fi:vv2}

\begin{figure}
\epsfbox{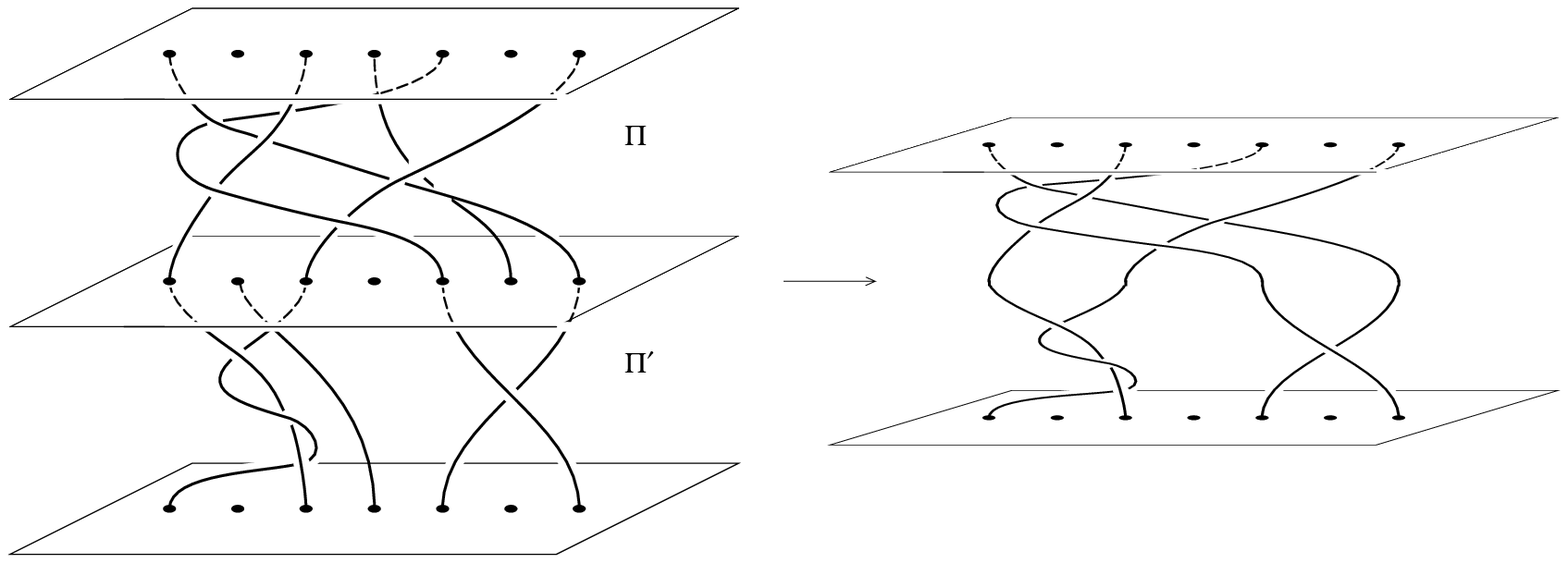}
\caption{} \label{fi:vv2}
\end{figure}

At the last stage it is necessary to remove any arc that does not join the upper 
or lower planes.

 The set of all
partial braids  with this operation forms an inverse braid monoid $IB_n$. 

One of the motivations to study $IB_n$ is that it is a natural setting for
the {\it Makanin braids}, which were also called by {\it smooth} braids by 
G.~S.~Makanin who first mentioned them in
\cite{Kou}, (page 78, question 6.23), and D.~L.~Johnson \cite{Joh1},
and by {\it Brunian} braids in the work of
J.~A.~Berrick, F.~R.~Cohen, Y.~L.~Wong and J.~Wu \cite{BCWW}). 
By the  usual definition a braid is  Makanin if it becomes trivial
after deleting any string, see formulas (\ref{eq:imak} - \ref{eq:mak}).
 According to the works of Fred Cohen, Jon Berrick, 
Wu Jie and others Makanin braids have connections with homotopy groups
of spheres. Namely the exists an exact sequence
\begin{equation}
1\to Mak_{n+1}(S^2) \to Mak_{n}(D^2) \to Mak_{n}(S^2) \to 
\pi_{n-1}(S^2)\to 1
\label{eq:maks2}
\end{equation} 
for  $n\geq 5$,
where $Mak_{n}(D^2) $ is the group Makanin braids  and 
$Mak_{n}(S^2) $ is the group of Makanin braids of the sphere $S^2$,
see Section~\ref{sec:gen}.

The purpose of this paper is to demonstrate that canonical properties of 
braid groups and notions based on braids often have there smooth continuation 
for the inverse braid monoid $IB_n$.

Usually  the braid group $Br_n$ 
is given by the following Artin presentation \cite{Art1}.
It has the generators $\sigma_i$, 
$i=1, ..., n-1$ and two types of relations: 
\begin{equation}
 \begin{cases} \sigma_i \sigma_j &=\sigma_j \, \sigma_i, \ \
\text{if} \ \ |i-j|
>1,
\\ \sigma_i \sigma_{i+1} \sigma_i &= \sigma_{i+1} \sigma_i \sigma_{i+1}
\end{cases} \label{eq:brelations}
\end{equation}
Classical braid groups $Br_n$ can be defined also as 
the mapping class group of a disc $D^2$ with $n$ points deleted (or fixed) and 
with its boundary fixed, or as the subgroup of the
automorphism group of a free group 
 $\operatorname{Aut} F_n, $ 
generated by the following automorphisms:
\begin{equation} \begin{cases} 
x_i &\mapsto x_{i+1},
\\ x_{i+1} &\mapsto x_{i+1}^{-1}x_ix_{i+1}, \\
x_j &\mapsto x_j, j\not=i,i+1. 
\end{cases} \label{eq:autf}
\end{equation}
 Geometrically
this action is depicted in the Figure~\ref{fi:mapcl},
where
$x_i$ correspond to the canonical loops on $D^2$ which form the
generators of the fundamental group the punctured disc.
\begin{figure}
\input mapcl
\caption{}\label{fi:mapcl}
\end{figure} 

There exist other presentations of the braid group.
Let 
\begin{equation*}
\sigma = \sigma_1 \sigma_{2} \dots \sigma_{n-1},
\label{eq:sigma}
\end{equation*} 
then the group $Br_n$ is generated by $\sigma_1$ and $\sigma$ because
\begin{equation*}
\sigma_{i+1} =\sigma^i \sigma_1 \sigma^{-i}, \quad i =1, \dots
{n-2}.
\label{eq:sigma_i}
\end{equation*} 
The relations for the generators $\sigma_1$ and $\sigma$ are the 
following
\begin{equation}
 \begin{cases}
\sigma_1 \sigma^i \sigma_1 \sigma^{-i} &= 
\sigma^i \sigma_1 \sigma^{-i} \sigma_1 \ \  \text{for} \ \
2 \leq i\leq {n / 2}, \\
\sigma^n &= (\sigma \sigma_1)^{n-1}.
\end{cases} \label{eq:2relations}
\end{equation}
The presentation (\ref{eq:2relations}) was given by Artin in the initial 
paper \cite{Art1}.
This presentation was also mentioned in the books by F.~Klein \cite{Kl}
and by H.~S.~M.~Coxeter and W.~O.~J.~Moser \cite{CM}.

An interesting series of presentations was given by V.~Sergiescu
\cite{Ser}. For every planar graph he constructed a presentation of the
group $Br_n$, where $n$ is the number of vertices of the graph,
with generators corresponding to edges and relations reflecting
the geometry of the graph. 
To each edge $e$ of  the graph he associates the braid $\sigma_e$ which is a 
clockwise half-twist along $e$ (see Figure~\ref{fig:edges}). 
Artin's classical presentation (\ref{eq:brelations}) in this context
corresponds to the graph consisting of the interval from 1 to $n$
with the natural numbers (from 1 to $n$) as vertices and with
segments between them as edges. 

\begin{figure}
\epsfbox{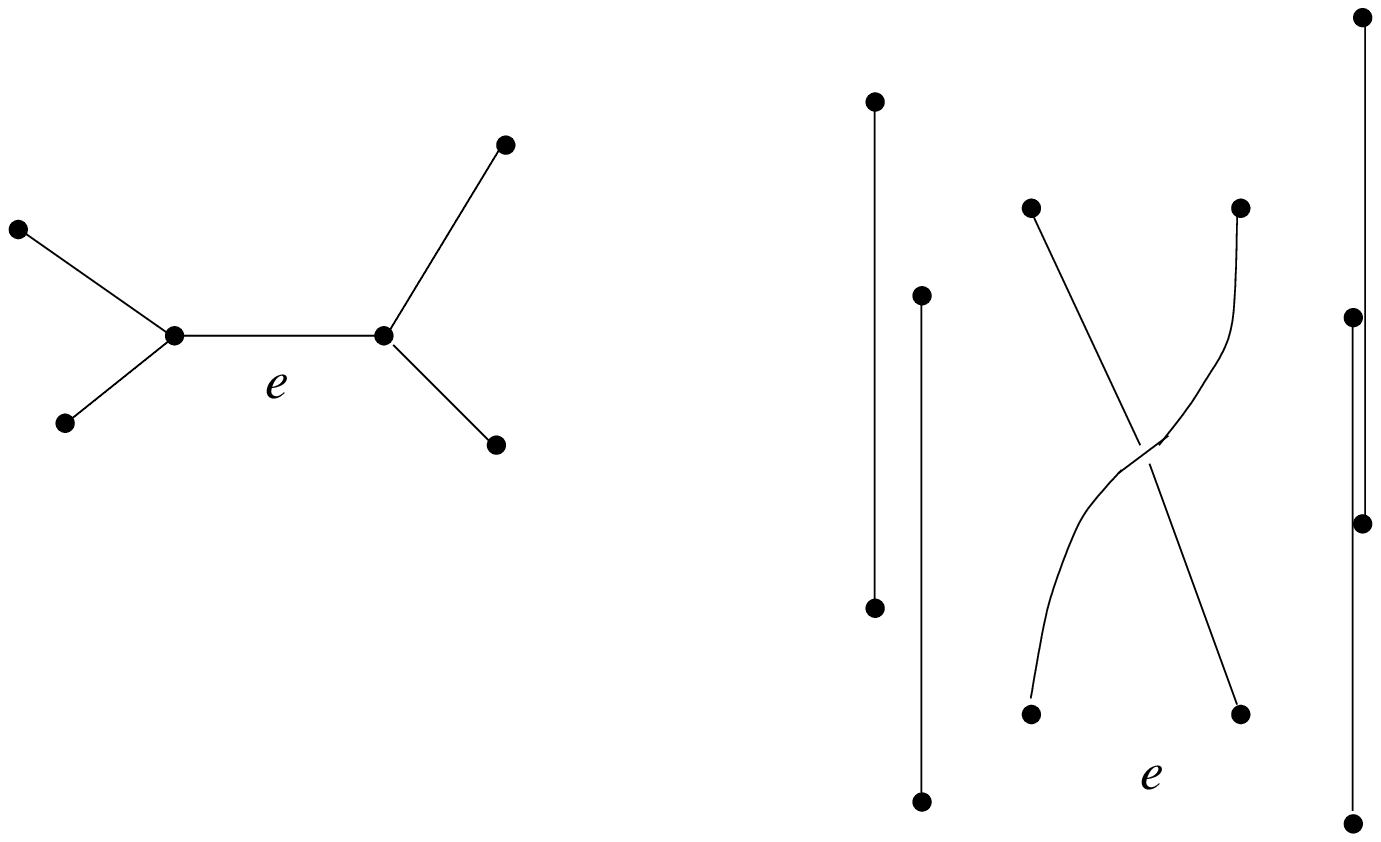}
\caption{} \label{fig:edges}
\end{figure}


Let $\vert \, \, \vert : \Sigma_n\to \Z$
be the length function on the symmetric group with respect to the 
standard generators $s_i$: for $x\in \Sigma_n$,
$\vert  x \vert $ is the smallest natural number $k$ such that $x$ 
is a product of $k$ elements of the set 
$\{s_1,...,s_{n-1}\}$.  
It is known (\cite{Bo}, Sect.~1, Ex.~13(b)) that two minimal expressions
for an element of $\Sigma_n$ are equivalent by using only the 
relations (\ref{eq:brelations}). This implies that
the canonical projection $\tau_n :Br_n\to \Sigma_n$ has a  unique  
set-theoretic section
$r:\Sigma_n\to Br_n$ 
such that $r(s_i)=\sigma_i$ for $i=1,...,n-1$
and $r(xy)= r(x)\, r(y)$ whenever 
$\vert  xy\vert =\vert  x\vert +\vert  y\vert$. 
The image  $r(\Sigma_n)$ under the name of {\it positive permutation braids}
 was studied by 
E.~El-Rifai and H.~R.~Morton \cite{EM}.

The following presentation for the inverse braid monoid was obtained in
\cite{EL}. It has the generators $\sigma_i, \sigma_i^{-1} $, $i=1,\dots,n-1,$
$\epsilon$, and relations
\begin{equation}
 \begin{cases} 
&\sigma_i\sigma_i^{-1}=\sigma_i^{-1}\sigma_i =1, \ \text {for \ all} \ i, \\
&\epsilon \sigma_i  =\, \sigma_i \epsilon \ \ \text {for } i\geq 2,   \\ 
&\epsilon\sigma_1 \epsilon  = \sigma_{1} \epsilon \sigma_1 \epsilon = 
\epsilon\sigma_{1} \epsilon \sigma_1, \\
&\epsilon = \epsilon^2 = \epsilon \sigma_1^2= \sigma_1^2 \epsilon
\end{cases} \label{eq:invbrelations}
\end{equation}
and the braid relations (\ref{eq:brelations}).

Geometrically the generator $\epsilon$ means that the first string in the trivial 
braid is absent.
 
If we replace the first relation in (\ref{eq:invbrelations})
 by the following set of relations
\begin{equation}
\sigma_i^2 =1, \ \text {for \ all} \ i, \\
 \label{eq:syminvrelations}
\end{equation}
and delete the
superfluous relations 
$$\epsilon =  \epsilon \sigma_1^2= \sigma_1^2 \epsilon, $$
we get a presentation of the symmetric inverse monoid $I_n$ \cite{Po} . 
We also can simply add the relations (\ref{eq:syminvrelations})
if we don't worry about redundant relations.
We get a canonical map \cite{EL}
\begin{equation}
\tau_n: IB_n\to I_n
 \label{eq:tauIBn}
\end{equation}
which is a natural extension of the corresponding map for the braid 
and symmetric groups.

More balanced relations for the inverse braid monoid were 
obtained in \cite{Gil}.
Let $\epsilon_i$ denote the trivial braid with $i$-th string deleted,
formally:
\begin{equation*}
\begin{cases} \epsilon_1 &= \epsilon, \\
\epsilon_{i+1} &= \sigma_i^{\pm 1}\epsilon_{i}\sigma_i^{\pm 1}.  
\end{cases} \end{equation*} 
So, the generators are: $\sigma_i, \sigma_i^{-1} $, $i=1,\dots,n-1,$
$\epsilon_i$, $i=1,\dots,n$, and relations  are the following:

\begin{equation}
 \begin{cases} 
&\sigma_i\sigma_i^{-1}=\sigma_i^{-1}\sigma_i =1, \ \text {for \ all} \ i, \\
&\epsilon_j \sigma_i  =\, \sigma_i \epsilon_j \ \ \text {for } \ |j-i|>1, \\ 
&\epsilon_i\sigma_i =  \sigma_{i} \epsilon_{i+1},  \\
&\epsilon_{i+1}\sigma_i =  \sigma_{i} \epsilon_{i},  \\
&\epsilon_i = \epsilon_i^2 , \\
& \epsilon_{i+1} \sigma_i^2= \sigma_i^2 \epsilon_{i+1} = \epsilon_{i+1}, \\
&\epsilon_i \epsilon_{i+1} \sigma_i = \sigma_{i} \epsilon_i \epsilon_{i+1}
=\epsilon_i\epsilon_{i+1},
\end{cases} \label{eq:invbrelations2}
\end{equation}
plus the braid relations (\ref{eq:brelations}).

\section{Properties of inverse braid monoid\label{sec:prop}}

The relations  (\ref{eq:invbrelations}) look asymmetric: one generator
for the idempotent part and $n-1$ generators for the group part.
If we minimise the number of generators of the group part and take the 
presentation  (\ref{eq:2relations}) for the braid group we get a presentation
of the inverse braid monoid with generators $\sigma_1, \sigma$,  $\epsilon$,
and relations:
\begin{equation}
 \begin{cases} 
&\sigma_1\sigma_1^{-1}=\sigma_1^{-1}\sigma_1 =1,  \\
&\sigma\sigma^{-1}=\sigma^{-1}\sigma =1,  \\
&\epsilon \sigma^{i}\sigma_1\sigma^{-i}  
=\, \sigma^{i}\sigma_1\sigma^{-i} \epsilon \ \ \text {for } 1\leq i \leq n-2,   \\ 
&\epsilon\sigma_1 \epsilon  = \sigma_{1} \epsilon \sigma_1 \epsilon = 
\epsilon\sigma_{1} \epsilon \sigma_1, \\
&\epsilon = \epsilon^2 = \epsilon \sigma_1^2= \sigma_1^2 \epsilon,
\end{cases} \label{eq:3invbrelations}
\end{equation}
plus (\ref{eq:2relations}).

Let $\Gamma$ be a planar graph of the Sergiescu graph presentation of the braid 
group \cite{Ser}, \cite{BelV}. Let us add new generators $\epsilon_v$ which 
correspond to each vertex of the graph $\Gamma$. Geometrically it means the
absence in the trivial braid of one string corresponding to the vertex $v$.
We orient the graph $\Gamma$ arbitrarily and so we get a starting $v_0=v_0(e)$
and a terminal $v_1=v_1(e)$ vertex for each edge $e$.
Consider the following relations
\begin{equation}
 \begin{cases} 
&\sigma_e\sigma_e^{-1}=\sigma_e^{-1}\sigma_e =1, \ \text {for  all  edges  of} \  
\Gamma, \\
&\epsilon_v \sigma_e  =\, \sigma_e \epsilon_v, \ \ \text {if  the vertex } \ 
v  \ \text{and  the  edge} \ e \ \text{do  not  intersect}, \\ 
&\epsilon_{v_0}\sigma_e =  \sigma_{e} \epsilon_{v_1}, \ \text{where} \
v_0=v_0(e), \ v_1= v_1(e), \\
&\epsilon_{v_1}\sigma_e =  \sigma_{e} \epsilon_{v_0},  \\
&\epsilon_v = \epsilon_e^2 , \\
& \epsilon_{v_i} \sigma_e^2= \sigma_e^2 \epsilon_{v_i} = \epsilon_{v_i}, 
\ \ i=0,1, \\
&\epsilon_{v_0}\epsilon_{v_1} \sigma_e = \sigma_{e} \epsilon_{v_0} \epsilon_{v_1}
=\epsilon_{v_0}\epsilon_{v_1}.
\end{cases} \label{eq:invbserel}
\end{equation}

\begin{Theorem} We get a Sergiescu graph presentation of the inverse braid monoid 
$IB_n$ if we add to the graph presentation of the braid group $Br_n$ the 
 relations (\ref{eq:invbserel}).
\end{Theorem}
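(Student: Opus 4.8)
The plan is to prove the statement by exhibiting the given presentation as Tietze-equivalent to the balanced presentation (\ref{eq:invbrelations2}) of \cite{Gil}, which is already known to define $IB_n$. First I would fix the geometric assignment sending each edge generator $\sigma_e$ to the clockwise half-twist along $e$ and each vertex generator $\epsilon_v$ to the partial braid obtained from the trivial braid by deleting the string sitting at the vertex $v$. A direct inspection of partial braids, in the spirit of Figure~\ref{fi:vv2}, shows that all seven families of relations in (\ref{eq:invbserel}) hold in $IB_n$: the half-twist along $e$ interchanges its two endpoint strings, which is precisely the content of $\epsilon_{v_0}\sigma_e=\sigma_e\epsilon_{v_1}$ and $\epsilon_{v_1}\sigma_e=\sigma_e\epsilon_{v_0}$, while deleting a string is idempotent and $\sigma_e^2$ returns the endpoint strings to their places. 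This yields a monoid homomorphism $\phi$ from the presented monoid $M$ onto $IB_n$; surjectivity holds because the $\sigma_e$ already generate $Br_n$ by \cite{Ser}, and $Br_n$ together with a single deletion idempotent generates $IB_n$ by \cite{EL}.

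The substance of the proof is the injectivity of $\phi$, which I would obtain by reducing $M$ to the presentation (\ref{eq:invbrelations2}). The case of the interval graph is immediate: there the vertices are $1,\dots,n$ in order, the edges are the pairs $\{i,i+1\}$, and the relations (\ref{eq:invbserel}) become \emph{verbatim} the relations (\ref{eq:invbrelations2}), so $M\cong IB_n$ by \cite{Gil}. For a general planar graph $\Gamma$ I would invoke Sergiescu's theorem \cite{Ser}, \cite{BelV}: the edge generators subject to the graph relations present $Br_n$, and the proof supplies explicit words expressing each Artin generator $\sigma_i$ in the $\sigma_e$ and conversely. The planar embedding orders the $n$ vertices and thereby fixes a canonical bijection $v\mapsto i(v)$ onto $\{1,\dots,n\}$; adjoining the defining relations $\epsilon_{i(v)}=\epsilon_v$ lets me identify the two families of idempotents.

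With these substitutions the task is to derive the linearly indexed idempotent relations of (\ref{eq:invbrelations2}) from the edge-local relations (\ref{eq:invbserel}) together with the graph relations, and conversely. The idempotency, the $\sigma_e^2$-absorption and the double-idempotent relations pass over routinely once the bijection is in place. The genuinely new point is that the transport relations $\epsilon_i\sigma_i=\sigma_i\epsilon_{i+1}$ and the commutation relations $\epsilon_j\sigma_i=\sigma_i\epsilon_j$ for $|j-i|>1$ must now be \emph{deduced}, since for a general graph the word $w_i$ representing $\sigma_i$ is a product of several half-twists rather than a single edge generator. The mechanism is that each factor $\sigma_e$ occurring in $w_i$ conjugates $\epsilon_{v_0(e)}$ to $\epsilon_{v_1(e)}$, so the edge-local transport relations compose along $w_i$ to give the required Artin-indexed relation.

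I expect the main obstacle to be \emph{completeness}: Sergiescu's graph relations are strictly richer than the Artin relations, carrying extra relations read off from the edges and the faces of $\Gamma$, and I must check that the purely edge-local idempotent relations are nonetheless strong enough to reconstruct the entire congruence defining $IB_n$ and to force no more. The connectivity of $\Gamma$ is the key geometric input: since any half-twist sends one endpoint idempotent to the other, the transport relations propagate the relations from a single vertex to every vertex of the graph, and any relation among the $\sigma_e$ coming from a face, being a consequence of the braid relations alone, automatically commutes past the idempotents it does not involve. Verifying that these two observations account for \emph{every} relation of (\ref{eq:invbrelations2}), and hence that $\phi$ is injective, is the crux on which the identification $M\cong IB_n$ rests.
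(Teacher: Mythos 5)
The paper gives no argument at all for this theorem (it is stated and immediately closed with a box), so your proposal must stand on its own; your overall skeleton --- a Tietze equivalence with Gilbert's presentation (\ref{eq:invbrelations2}), with well-definedness and surjectivity of the geometric map handled by \cite{Ser}, \cite{Gil} and \cite{EL} --- is the natural and workable route. The genuine gap is that you misplace the difficulty, and the step you dismiss is exactly the one your mechanism cannot perform. The transport relations $\epsilon_i\sigma_i=\sigma_i\epsilon_{i+1}$ and $\epsilon_j\sigma_i=\sigma_i\epsilon_j$ are in fact the easy ones: pushing $\epsilon_v$ letter by letter through the word $w_i$ representing $\sigma_i$ applies the transpositions of the edges traversed, and since the underlying permutation of $w_i$ is $(i,i+1)$, the right index emerges at the far end. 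By contrast, the absorption relations $\epsilon_{i+1}\sigma_i^2=\sigma_i^2\epsilon_{i+1}=\epsilon_{i+1}$ and $\epsilon_i\epsilon_{i+1}\sigma_i=\sigma_i\epsilon_i\epsilon_{i+1}=\epsilon_i\epsilon_{i+1}$, which you say ``pass over routinely,'' do \emph{not} follow from transport at all: transport moves idempotents past letters but never deletes a letter, so it only yields $\epsilon_{i+1}w_i^2=w_i^2\epsilon_{i+1}$, never $\epsilon_{i+1}w_i^2=\epsilon_{i+1}$. To get absorption you need an extra input: in the graph group one can write $w_i=A_i\sigma_{e_i}A_i^{-1}$ for a single edge $e_i$ and a word $A_i$ in the $\sigma_e$ (an Artin half-twist is conjugate to an edge half-twist by a product of half-twists; this relation then holds among the $\sigma_e$-words of the presented monoid $M$, since the subgroup they generate is a quotient of the graph group). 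Then $\epsilon_{i+1}w_i^2=\epsilon_{i+1}A_i\sigma_{e_i}^2A_i^{-1}=A_i\epsilon_u\sigma_{e_i}^2A_i^{-1}$, where the conjugation equation forces the transport permutation of $A_i$ to carry $\{i,i+1\}$ onto the endpoints of $e_i$, so $u$ is an endpoint of $e_i$; only now does the edge-local relation $\epsilon_u\sigma_{e_i}^2=\epsilon_u$ apply, and transporting back through $A_i^{-1}$ gives $\epsilon_{i+1}$.

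A second omission: your argument repeatedly needs to reorder idempotents (transporting $\epsilon_i\epsilon_{i+1}$ may return $\epsilon_{i+1}\epsilon_i$), but commutativity $\epsilon_u\epsilon_v=\epsilon_v\epsilon_u$ is not among the relations (\ref{eq:invbserel}) and must itself be derived: for adjacent vertices, from $\epsilon_{v_0}\epsilon_{v_1}\sigma_e=\epsilon_{v_0}\sigma_e\epsilon_{v_0}=\sigma_e\epsilon_{v_1}\epsilon_{v_0}$ compared with $\epsilon_{v_0}\epsilon_{v_1}\sigma_e=\sigma_e\epsilon_{v_0}\epsilon_{v_1}$, then cancelling $\sigma_e$; for arbitrary vertices, by induction along a path, which is where connectedness of $\Gamma$ enters. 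Finally, the ``completeness'' worry in your last paragraph is not a real issue and should not be the crux: since the geometric map $\phi:M\to IB_n$ is a well-defined surjection, the graph relations can never ``force too much,'' and the reverse direction of the Tietze equivalence (deriving (\ref{eq:invbserel}) from (\ref{eq:invbrelations2})) is automatic because Gilbert's presentation is already known to present $IB_n$ and the relations (\ref{eq:invbserel}) hold there geometrically. The entire content of the theorem is the finite list of derivations above; with the conjugate form of $w_i$ and idempotent commutativity supplied, your plan closes up.
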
 
\hfill $\square$

 A {\it positive partial braid } is a element of $IB_n$
which can be written as a word with only positive entries of the generators
$\sigma_i$, $i=1, \dots, n-1$.

A positive  partial braid is called a {\it positive partial 
permutation braid } if it can be drawn as a geometric positive partial braid
in which every pair of strings crosses at most once.

Write $IB_n^+$ for the set of positive partial 
permutation braids. 
\begin{Proposition} If the partial braids $b_1$, $b_2\in IB_n^+$ induce the same
partial permutation on their strings, then $b_1=b_2$. For each $s\in I_n$ 
there is a partial braid $b\in IB_n^+$, which induces this partial permutation:
$\tau(b)=s$.
\end{Proposition}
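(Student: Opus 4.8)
The plan is to reduce both assertions to the classical theory of positive permutation braids in $Br_n$ — in particular to the existence and uniqueness of the section $r:\Sigma_n\to Br_n$ recalled in the introduction — by exploiting the description of a partial braid as a genuine braid carried by its surviving strings. First I would record the underlying structure of an element of $IB_n$: after removing every arc that meets neither the top nor the bottom plane, a partial braid $b$ is determined by three data, namely the set $I\subseteq\{1,\dots,n\}$ of strings present at the top, the set $J\subseteq\{1,\dots,n\}$ of strings present at the bottom (so $|I|=|J|$), and the braid on these $k=|I|$ surviving strings joining the top positions $I$ to the bottom positions $J$. The partial permutation $\tau(b)\in I_n$ reads off precisely $I$ as its domain, $J$ as its image, and the underlying permutation of the surviving braid; thus $\tau(b)$ already recovers $I$ and $J$, and the rest is governed entirely by $Br_k$.

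For the existence statement, given $s\in I_n$ with domain $I$ and image $J$, I would set $k=|I|=|J|$ and use the unique order-preserving bijections $I\to\{1,\dots,k\}$ and $J\to\{1,\dots,k\}$ to transport $s$ to an honest permutation $\pi\in\Sigma_k$. Taking the positive permutation braid $r(\pi)\in Br_k$ and re-indexing its $k$ strands back onto the positions $I$ at the top and $J$ at the bottom, with all remaining strings deleted, produces a positive partial braid $b$. Since in $r(\pi)$ every pair of strands crosses at most once and there are no other strands, the same holds for $b$, so $b\in IB_n^+$; and by construction $\tau(b)=s$.

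For the uniqueness statement, suppose $b_1,b_2\in IB_n^+$ with $\tau(b_1)=\tau(b_2)=s$. Reading off $I$ and $J$ from $s$, both $b_i$ have the same surviving strings, and restricting to them (after the order-preserving re-indexing) yields positive braids $\beta_1,\beta_2\in Br_k$ in which every pair of strands crosses at most once and which induce the same permutation $\pi\in\Sigma_k$. The crucial point is the classical fact (\cite{Bo}, Sect.~1, Ex.~13(b)) that any two minimal expressions of $\pi$ differ only by the braid relations \eqref{eq:brelations}, so that the geometric class of positive permutation braids inducing $\pi$ consists of the single element $r(\pi)$; consequently $\beta_1=\beta_2=r(\pi)$. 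Re-attaching the deleted strings, which contribute nothing beyond the common data $I,J$, gives $b_1=b_2$ in $IB_n$.

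The step I expect to require the most care is the passage between the geometric condition that each pair of strings crosses at most once and the combinatorial statement that the surviving braid is a reduced, minimal-length positive permutation braid in the sense of \cite{EM}, together with the verification that the order-preserving re-indexing is compatible with the multiplication of $IB_n$. Once these identifications are secured, both halves of the proposition follow at once from the classical $Br_k$-theory and the uniqueness of the section $r$.
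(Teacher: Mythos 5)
Your proposal is correct, but its logical structure differs from the paper's own proof, which consists of a single sentence: the arguments for $Br_n$ (those of El-Rifai and Morton \cite{EM} on positive permutation braids) are geometric and therefore translate verbatim to partial braids --- that is, the author re-runs the geometric isotopy argument directly in the partial setting. You instead argue by \emph{reduction}: a partial braid is the data of its top set $I$, bottom set $J$, and a braid on the $k$ surviving strings, and restriction plus order-preserving re-indexing identifies the positive partial permutation braids supported on $(I,J)$ with the positive permutation braids of $Br_k$, after which both existence and uniqueness follow from the classical bijection between $\Sigma_k$ and positive permutation braids. This is precisely the decomposition $IB_n = \amalg\, Br(I_k,J_k)$ that the paper itself exploits later, in the proof of Theorem~\ref{theo:isoendo}, so your argument sits naturally inside the paper's framework; what it buys is modularity --- the classical $Br_k$ theorem is used as a black box, and the only genuinely new verifications are that restriction to surviving strands preserves positivity and the pairwise-crossing-at-most-once condition, and that the re-indexing bijection is faithful --- whereas the paper's route is shorter but leaves the entire translation to the reader. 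One caution about your citations: the Bourbaki exercise (\cite{Bo}) only yields well-definedness of the section $r$ (all minimal words for $\pi$ represent the same braid); the stronger fact that \emph{every} geometric positive braid with pairwise crossings at most one inducing $\pi$ is isotopic to $r(\pi)$ is the geometric content of \cite{EM}, i.e.\ exactly the classical input the paper invokes --- you correctly flag this as the step requiring the most care, so it is an acknowledged dependency rather than a gap.
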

\begin{proof} The original arguments for $Br_n$ are geometrical and 
so they translate completely to the case of  partial braids. 
\end{proof}

Let $E F_n$ be a monoid of partial isomorphisms of a free group
$F_n$ defined as follows. Let $a$ be an element of the symmetric
inverse monoid $I_n$, $a\in I_n$, $J_k =\{j_1, \dots, j_k\}$ 
is the image of $a$, and elements $i_1, \dots, i_k$ belong to
domain of the definition of $a$. The monoid $E F_n$
consists of isomorphisms
$$<x_{i_1}, \dots, x_{i_k}> \, \to \ <x_{j_1}, \dots, x_{j_k}>$$
expressed by 
$$f_a :x_i\mapsto w_i^{-1} x_{a(i)}w_i $$
if $i$ is among $i_1, \dots, i_k$ and not defined otherwise and 
$w_i$ is a word on $x_{j_1}, \dots, x_{j_k}$.
The composition of $f_a$ and $g_b$, $a,b\in I_n$ 
is defined for $x_i$ belonging to the domain of $a\circ b$.
 We define a map $\phi_n$ from $IB_n$ to $E F_n$
expanding the canonical inclusion 
\begin{equation*}
Br_n \to  \operatorname{Aut} F_n
 \end{equation*}
by the condition that $\phi_n(\epsilon)$ 
as a partial isomorphism of $F_n$ is given by the
formula
\begin{equation} 
\phi(\epsilon)(x_i) = \begin{cases}
x_i {\text{ if} } \ i\geq 2 , \\ 
{\text {not defined,  if }} i=1 . 
\end{cases} \label{eq:endf1}
\end{equation}

Using the presentation (\ref{eq:invbrelations}) we see that $\phi_n$ is 
correctly defined homomorphism of monoids
\begin{equation*}
\phi_n: IB_n \to  E F_n.
 \end{equation*}
 
\begin{Theorem} The homomorphism $\phi_n$ is a monomorphism.
\label{theo:isoendo} 
\end{Theorem}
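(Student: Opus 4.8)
The plan is to prove injectivity of $\phi_n$ by reducing it to the already-established injectivity of the classical Artin representation $Br_n \to \Aut F_n$, using the combinatorial structure of the inverse braid monoid. The key observation is that every element of $IB_n$ factors in a controlled way: an element of $IB_n$ is determined by (a) which strings survive, i.e.\ an underlying element $s = \tau_n(b) \in I_n$, and (b) a genuine braid describing how the surviving strings are braided. Concretely, using the relations (\ref{eq:invbrelations}), I would first show that every element of $IB_n$ can be written in a normal form $\epsilon_{i_1}\cdots\epsilon_{i_r}\, \beta$ (or symmetrically $\beta\,\epsilon_{j_1}\cdots\epsilon_{j_s}$) where $\beta\in Br_n$ and the $\epsilon_{i}$ are the idempotents deleting specified strings. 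The idempotent factors record the domain and range of the partial braid, while $\beta$ records the braiding.

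Next I would analyze what $\phi_n$ does to such a normal form. Applying $\phi_n$, the partial isomorphism $\phi_n(b)$ has a domain $\langle x_{i_1},\dots,x_{i_k}\rangle$ and range $\langle x_{j_1},\dots,x_{j_k}\rangle$ that are exactly the free subgroups on the generators indexed by the surviving strings, and these index sets are read off directly from $\tau_n(b)=s\in I_n$. Since the classical Popova presentation realizes $I_n$ faithfully as the symmetric inverse monoid of partial injections, and since $\tau_n$ factors through $\phi_n$ followed by the obvious forgetful map $EF_n \to I_n$ sending $f_a$ to $a$, any two elements $b, b'$ with $\phi_n(b)=\phi_n(b')$ must first have $\tau_n(b)=\tau_n(b')=s$. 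This pins down the idempotent part of the normal form completely. It then remains to show that the braid parts agree.

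For the braid part, suppose $\phi_n(b)=\phi_n(b')$ with $b=\epsilon_S\,\beta$ and $b'=\epsilon_S\,\beta'$ sharing the same idempotent $\epsilon_S$ (deletion of the strings not in the common surviving set $S$). The restriction of $\phi_n(b)$ to its domain is the map $x_i \mapsto w_i^{-1}x_{s(i)}w_i$; I would argue that this restriction, viewed on the free group on the surviving generators, coincides with the image of $\beta$ under a suitable classical Artin action on that subset of generators. Because the classical representation $Br_n \to \Aut F_n$ is faithful (a result we may invoke as the stated canonical inclusion), equality of the restricted automorphisms forces $\beta$ and $\beta'$ to induce the same automorphism, hence $\beta=\beta'$ in the appropriate braid group of the surviving strands, and therefore $b=b'$.

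The main obstacle I anticipate is making the ``restriction to surviving strands'' argument fully rigorous: when strings are deleted, the surviving generators must be identified with the generators of a smaller free group $F_k$, and one must verify that the partial isomorphism $\phi_n(b)$ restricted to $\langle x_{i_1},\dots,x_{i_k}\rangle$ really is the full Artin automorphism for the braid on those $k$ strands, with no loss of information from the deletion. In particular one must check that the conjugating words $w_i$, although written in terms of the surviving generators $x_{j_1},\dots,x_{j_k}$, still encode the complete braiding data and that distinct braids on the surviving strands yield distinct such tuples $(w_i)$. This is exactly where the faithfulness of the classical braid representation does the essential work; once the reduction to that known injectivity is set up correctly, the remainder is bookkeeping with the normal form and the relations (\ref{eq:invbrelations}).
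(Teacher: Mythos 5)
Your proposal is correct and follows essentially the same route as the paper: both decompose $IB_n$ by the underlying partial permutation (which, as you note, is recoverable from $\phi_n(b)$ via the forgetful map $EF_n\to I_n$, so equal images force equal idempotent data) and then reduce injectivity on each piece to the faithfulness of the classical Artin representation of $Br_k$ on the surviving strands, transported into $EF_n$ by relabeling maps between $F_k$ and the free subgroups of $F_n$ on the surviving generators. The ``main obstacle'' you flag --- verifying that $\phi_n(b)$ restricted to the surviving generators really is the Artin automorphism of the induced $k$-strand braid --- is exactly the step the paper carries out explicitly, by computing the action of a generator $\sigma(i_1,i_2;j_1,j_2)$ on the $x_i$ and thereby establishing the commutativity of the diagram relating $\phi_n$ to $\psi(I_k,J_k)=\beta(J_k)\,\alpha\,\gamma(I_k)$.
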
 
\begin{proof}
Monoid $IB_n$ as a set is a disjoint union of copies of braid groups
$Br_{k}$, $k= 0, \dots n$. (See \cite{Gil} for the exact formula of 
this splitting of $IB_n$ as a groupoid.) Each copy of the group $Br_k$
is identified by the numbers of inputs of strings 
$i_1, \dots, i_{k}$ and outputs of them $j_1, \dots, j_{k}$. 
Let $I_k = \{i_1, i_2, \dots, i_{k}\}$, $i_1< i_2< \dots <i_{k}$,
$J_k = \{j_1, j_2,  \dots, j_{k}\}$, $j_1<j_2< \dots <j_{k}$,
and let $Br(I_k,J_k)$ be the corresponding copy of the braid group.
So
\begin{equation}
IB_n = \amalg_{I_k, J_k \subset \{1,\dots n\}} Br(I_k,J_k).
\end{equation}
Define a homomorphism 
$$\psi(I_k, J_k): Br_n \to E F_n.$$
Let $\gamma(I_k)$ be the homomorphism $F_n \to F_k$ defined by
\begin{equation} \begin{cases} 
x_{i_l} &\mapsto x_{l},  \\ 
x_{s} &\mapsto e \ \text{ if} \ s \not\in I_k.
\end{cases} \label{eq:endfs}
\end{equation}
Homomorphism 
$$\beta(J_k): F_k\to F_n$$ 
we define as an inclusion
$$\beta(J_k)(x_l) = x_{j_l}, \ \ l=1, \dots , k.$$
For each automorphism $\alpha: F_k\to F_k$, $\alpha\in Br_n$,
 its image 
$\psi(I_k,J_k)(\alpha) $ in 
$E F_n$ is defined as a composition
$$\psi(I_k,J_k)(\alpha) = \beta(J_k) \, \alpha \, \gamma(I_k),$$
we compose from right to left as for functions.
Homomorphism $\psi(I_k, J_k)$ is a monomorphism.
Consider the following  diagram
\begin{equation}
\begin{CD}
Br_k@>Id>>Br_k   \\
@VV\rho V  @VV\psi(I_k,J_k)V \\
Br(I_k,J_k)@>\phi_n>> E F_n
\label{eq:comd} \end{CD}
\end{equation}
where  the left hand map
$\rho$
is the bijection. Let us prove that the diagram commutes.
Consider a generator of $Br_k$, say $\sigma_1$. We denote
$\rho(\sigma_1)$ by $\sigma(i_1,i_2; j_1,j_2) \in IB_n$. This is the 
positive partial braid where the string starting at $i_1$ goes to
$j_1$ and the string starting at $i_2$ goes to $j_2$. There is no
strings starting before $i_1$, between $i_1$ and $i_2$,
ending before $j_1$ and between $j_1$ and $j_2$.
Suppose that $i_1<j_2<i_2<j_1$, the other cases can be 
considered the same way. The partial braid 
$\sigma(i_1,i_2; j_1,j_2) \in IB_n$ as an element of the 
inverse braid monoid can be expressed as a word on generators
in the following form:
\begin{equation*}
\sigma(i_1,i_2; j_1,j_2) =
\sigma_{i_1} \sigma_{i_1+1} \dots \sigma_{i_2} \dots 
\sigma_{j_1-1} \sigma_{i_2-2} \dots \sigma_{j_1}
\epsilon_{i_1} \epsilon_{i_1+1} \dots \epsilon_{j_2-1} 
\epsilon_{j_2+1} \dots \epsilon_{j_1-1}. 
\end{equation*}
Note that the expression $\sigma_{i_2-2} \dots \sigma_{j_1}$ is present 
in the formula only if $i_2-2\geq j_2$.
We denote it also as consisting of the two parts:
\begin{equation*}
\sigma(i_1,i_2; j_1,j_2) =
\sigma \epsilon. 
\end{equation*}
Let us study the action of $\sigma(i_1,i_2; j_1,j_2)$ on the 
generators of the free group. We have:
\begin{equation*}
\sigma(x_{i_1}) = x_{j_2} 
\end{equation*}
and then apply the action of the part $\epsilon$:
\begin{equation*}
\epsilon(x_{j_2}) = x_{j_2}.
\end{equation*}
Also we have:
\begin{equation*}
\sigma(x_{l}) = x_{j_2}^{-1}x_{l} x_{j_2}  \ \ \text{for} \ \ 
i_1 < l < i_2.
\end{equation*}
After the application of $\epsilon$
we obtain $\sigma(i_1,i_2; j_1,j_2)(x_l) =e $.
We have
\begin{equation*}
\sigma(x_{i_2}) = x_{j_2}^{-1}x_{i_2-1}^{-1}\dots x_{j_1+1}^{-1}
x_{j_1} x_{j_1+1} \dots x_{i_2-1} x_{j_2} 
\end{equation*}
and then apply the action of the part $\epsilon$:
\begin{equation*}
\epsilon(x_{j_2}^{-1}x_{i_2-1}^{-1}\dots x_{j_1+1}^{-1}
x_{j_1} x_{j_1+1} \dots x_{i_2-1} x_{j_2} )=
x_{j_2}^{-1}x_{j_1} x_{j_2}. 
\end{equation*}
We get exactly the action of the image of $\sigma_1$ by the 
composition of the canonical inclusion and the map $\psi(I_k, J_k)$.
The diagram (\ref{eq:comd}) commutes.
So, $\phi_n$ is also a monomorphism.
The different copies of  $Br(I_k,J_k)$ of $IB_n$ do not
intersect in  $E F_n$. So,
\begin{equation*}
\phi_n: IB_n \to E F_n
 \end{equation*}
 is a monomorphism.
\end{proof}
\begin{Theorem} The monomorphism $\phi_n$ gives a 
solution of the word problem for the inverse braid monoid
in the presentations (\ref{eq:brelations}), (\ref{eq:invbrelations}),
(\ref{eq:invbrelations2}), (\ref{eq:invbserel}) and (\ref{eq:3invbrelations}).
\end{Theorem}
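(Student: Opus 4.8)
The plan is to leverage the injectivity of $\phi_n$ established in Theorem~\ref{theo:isoendo}: since $\phi_n$ is a monomorphism, two words $w_1$, $w_2$ in the generators represent the same element of $IB_n$ if and only if $\phi_n(w_1)=\phi_n(w_2)$ in $EF_n$. Thus it suffices to exhibit an algorithm that, given any word, computes its image in $EF_n$ in a canonical form, together with the observation that two such forms can be compared effectively. Because all five presentations describe the same monoid $IB_n$, and $\phi_n$ is defined on each generating set by the explicit formulas (\ref{eq:autf}) and (\ref{eq:endf1}), a single procedure handles all of them uniformly.

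First I would describe how to evaluate $\phi_n$ on a word. Each generator is sent to a concrete partial isomorphism of $F_n$: the $\sigma_i^{\pm1}$ act by the Artin automorphisms (\ref{eq:autf}) and their (everywhere-defined) inverses, while $\epsilon$, and likewise the $\epsilon_i$ or $\epsilon_v$, deletes a generator as in (\ref{eq:endf1}). Composing these from right to left, one carries along two pieces of data: the current domain $D\subseteq\{x_1,\dots,x_n\}$ of surviving generators, and, for each surviving $x_i$, a word $f(x_i)$ in the free group. At each step one substitutes and then freely reduces; a generator leaves the domain as soon as an $\epsilon$-type factor sends it to the undefined symbol. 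This is a finite, purely mechanical computation.

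Next I would reduce equality in $EF_n$ to the word problem in the free group. A partial isomorphism in $EF_n$ is completely determined by its domain together with the reduced words it assigns to the generators of that domain. Hence $\phi_n(w_1)=\phi_n(w_2)$ holds precisely when the two computed domains coincide and, for every $x_i$ in the common domain, the freely reduced words $f_1(x_i)$ and $f_2(x_i)$ are literally equal. Free reduction yields a unique normal form in $F_n$, so this comparison is decidable; combined with the injectivity of $\phi_n$, this gives a solution of the word problem.

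The point that must be handled with care — but which turns out to be automatic here — is the potential ambiguity in writing a partial isomorphism as $x_i\mapsto w_i^{-1}x_{a(i)}w_i$, since the conjugator $w_i$ is determined only modulo the centraliser of $x_{a(i)}$ in $F_n$. I avoid this entirely by comparing the fully reduced images $f(x_i)\in F_n$ rather than the conjugators, so no centraliser computation is required. The only genuine ingredients are therefore the explicit formulas for $\phi_n$ on generators, the effectiveness of composition and free reduction, and Theorem~\ref{theo:isoendo}. I would expect the main subtlety to lie not in the algorithm itself but in ensuring that the book-keeping for the domain $D$ is consistent under the defining relations, which is exactly what is guaranteed by $\phi_n$ being a well-defined homomorphism; once that is in hand, the decision procedure is complete.
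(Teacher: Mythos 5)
Your proposal is correct and takes essentially the same route as the paper: the paper's one-sentence proof likewise reduces equality of words in $IB_n$, via the monomorphism $\phi_n$ of Theorem~\ref{theo:isoendo}, to comparing the effectively computable actions on the finite set of generators of the free group $F_n$, exactly as for the braid group. Your extra detail (tracking the domain, substituting and freely reducing, comparing reduced images rather than conjugators) just makes explicit the effectiveness that the paper leaves implicit.
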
 
\begin{proof} As for the braid group if follows from the fact that two words
represent the same element of the monoid iff they have the same action
on the finite set of generators of the free group $F_n$.
\end{proof}

Theorem~\ref{theo:isoendo}  gives also a possibility to interpret the inverse 
braid monoid as
a monoid of isotopy classes of maps. As usual consider a disc $D^2$ with $n$ 
fixed points. Denote the set of these points by $Q_n$.
The fundamental group of $D^2$ with these points deleted is isomorphic to $F_n$. 
Consider homeomorphisms
 of $D^2$ onto a copy of the same disc with the condition that
only $k$ points of $Q_n$,  $k \leq n$ (say $i_1, \dots, i_k$) are mapped
bijectively onto the $k$ points (say $j_1, \dots, j_k$) of the second copy 
of $D^2$. 
Consider the isotopy 
classes of such inclusions and denote the set of them by $IM_n(D^2)$. Evidently 
it is a monoid.

\begin{Theorem} The monoids $IB_n$ and  $IM_n(D^2)$
are isomorphic.
\end{Theorem}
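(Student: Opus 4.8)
The plan is to route the desired isomorphism through the monoid $EF_n$ of partial isomorphisms of the free group, using Theorem~\ref{theo:isoendo} as the backbone. First I would construct a monoid homomorphism $\nu\colon IM_n(D^2)\to EF_n$ by sending an isotopy class of a partial homeomorphism $h$, carrying $i_1,\dots,i_k$ bijectively onto $j_1,\dots,j_k$, to the partial isomorphism it induces on fundamental groups. Concretely, $h$ restricts to a homeomorphism of the punctured discs realizing an isomorphism between the subgroups $\langle x_{i_1},\dots,x_{i_k}\rangle$ and $\langle x_{j_1},\dots,x_{j_k}\rangle$ of $F_n=\pi_1(D^2\setminus Q_n)$, which is exactly the kind of partial isomorphism comprising $EF_n$. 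One must check that $\nu$ respects the monoid structure: when composing two partial homeomorphisms the source points that fail to land on marked points are forgotten, and this matches both the deletion of arcs defining multiplication in $IB_n$ and the restriction of domains defining composition in $EF_n$.

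Then I would verify two things. (\emph{Faithfulness.}) The map $\nu$ is injective. Decomposing $IM_n(D^2)$ as a disjoint union $\amalg_{I_k,J_k} M(I_k,J_k)$ over source/target sets, each component $M(I_k,J_k)$ is identified with the mapping class group $Br_k$ of the $k$-punctured disc (after composing with a fixed reference homeomorphism sending $i_l\mapsto j_l$), and on that component $\nu$ factors as the classical faithful action $Br_k\hookrightarrow\operatorname{Aut}F_k$ sandwiched between the projection $\gamma(I_k)$ and the inclusion $\beta(J_k)$ of (\ref{eq:endfs}); faithfulness of the classical action then gives injectivity on each component, while distinct pairs $(I_k,J_k)$ produce partial isomorphisms with distinct domains or ranges, so the components cannot collide. (\emph{Same image.}) Comparing with the diagram (\ref{eq:comd}) in the proof of Theorem~\ref{theo:isoendo}, the composite $\beta(J_k)\,\alpha\,\gamma(I_k)$ is literally $\psi(I_k,J_k)(\alpha)$, so on each component the images of $\phi_n$ and of $\nu$ coincide. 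Hence $\nu\bigl(IM_n(D^2)\bigr)=\phi_n(IB_n)$ inside $EF_n$.

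Combining these, both $\phi_n$ and $\nu$ are injective monoid homomorphisms with the same image, so $\nu^{-1}\circ\phi_n\colon IB_n\to IM_n(D^2)$ is a well-defined isomorphism of monoids. Geometrically this isomorphism is the expected one: a partial braid is realized as the isotopy class of the partial homeomorphism obtained by tracing the motion of the points from the bottom to the top of the braid, the deleted strings corresponding exactly to the source points not carried into $Q_n$.

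The main obstacle I expect is establishing faithfulness of $\nu$ cleanly in the partial setting, namely that an isotopy class of partial homeomorphism really is detected by the induced partial isomorphism of $\pi_1$, together with the bookkeeping that forgetting of unmatched points on the geometric side corresponds precisely to restriction of domains in $EF_n$. The core input is the classical Dehn--Nielsen type faithfulness of $Br_k\hookrightarrow\operatorname{Aut}F_k$, but one must argue that passing to partial data loses no information beyond what is already recorded by the source and target sets $I_k$, $J_k$.
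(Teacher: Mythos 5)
Your proof is correct, but it takes a genuinely different route from the paper's. The paper's own proof is a one-sentence geometric sketch: exactly as in the classical identification of $Br_k$ with the mapping class group of the punctured disc, it uses Alexander's trick to associate to an element of $IM_n(D^2)$ the partial braid traced out by the marked points, and asserts that this assignment is an isomorphism; the geometry is thus redone directly in the partial setting. You instead embed both monoids into $E F_n$ --- $IB_n$ via Theorem~\ref{theo:isoendo}, and $IM_n(D^2)$ via the induced partial isomorphisms on $\pi_1$ --- and match the images component by component against $\psi(I_k,J_k)(Br_k)$ using the diagram (\ref{eq:comd}). What this buys is reuse of the already-proved Theorem~\ref{theo:isoendo}, so no new geometric argument is needed in the partial setting. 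What it costs is twofold. First, your identification of each component $M(I_k,J_k)$ with $Br_k$, together with the claim that $\nu\bigl(M(I_k,J_k)\bigr)$ is exactly the set of composites $\beta(J_k)\,\alpha\,\gamma(I_k)$ with $\alpha$ in the Artin image of $Br_k$ in $\operatorname{Aut}F_k$, is precisely the classical theorem that every boundary-fixing mapping class of the $k$-punctured disc comes from a braid acting on $\pi_1$ by the formulas (\ref{eq:autf}); the standard proof of that theorem is itself Alexander's trick, so the geometric input is not eliminated but quarantined in the classical, non-partial case --- which is legitimate, since the paper treats that case as known. Second, the homomorphism property of $\nu$ --- that forgetting marked points which fail to stay on $Q_n$ under composition corresponds to restriction of domains in $E F_n$, i.e.\ to the generator-killing projections of type (\ref{eq:endfs}) --- is a real check, which you correctly flag as the main obstacle; it amounts to commutativity on $\pi_1$ of the squares formed by a partial homeomorphism restricted to two nested punctured discs, and is routine once written down. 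With those two points made explicit, your argument is complete and yields the same isomorphism as the paper's (tracing the motion of the surviving points), as you observe at the end.
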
 
\begin{proof} 
The same way as in the proof for the braid group using Alexander's trick
we associate a partial braid 
to an element of $IM_n(D^2)$ and prove that it is an isomorphism. 
\end{proof}

These considerations can be generalized to the following definition.
Consider a surface $S_{g,b,n}$ of the genus $g$ with $b$ boundary components and
the set $Q_n$ of $n$ fixed points. Let $f$ be a homeomorphism of $S_{g,b,n}$ 
which maps  $k$ points,  $k \leq n$, from $Q_n$:  $\{i_1, \dots, i_k\}$ to $k$ points 
$\{j_1, \dots, j_k\}$ also from $Q_n$. The same way let $h$  
be a homeomorphism of $S_{g,b,n}$ 
which maps  $l$ points, $l\leq n$, from $Q_n$, say $\{s_1, \dots, s_l\}$ to $l$ points 
$\{t_1, \dots, t_l\}$ again from $Q_n$. Consider the intersection of the sets
$\{j_1, \dots, j_k\}$ and $\{s_1, \dots, s_l\}$,  let it be the set of cardinality $m$,
it may be empty. Then the composition of $f$ and $h$ maps $m$ points of $Q_n$
to $m$ points (may be different) of $Q_n$. If $m=0$ then the composition have
no relation to the set $Q_n$. Denote the set of isotopy classes of such maps
by $\mathcal I \mathcal {M}_{g,b,n}$. Composition defines a structure of monoid
on $\mathcal I \mathcal {M}_{g,b,n}$.
\begin{Proposition} The monoid
$\mathcal I \mathcal {M}_{g,b,n}$ is inverse.
\end{Proposition}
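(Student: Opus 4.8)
The plan is to invoke the classical characterization of inverse monoids: a monoid is inverse if and only if it is regular (every element $a$ admits some $b$ with $a=aba$) and its idempotents commute. With this criterion in hand the proof splits cleanly into a regularity statement and an analysis of the idempotents of $\mathcal{I}\mathcal{M}_{g,b,n}$, and I would cite the criterion from a standard reference on inverse semigroups.

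First I would record the structural feature that drives everything. Tracking which marked points are carried to which gives a surjective monoid homomorphism $\tau:\mathcal{I}\mathcal{M}_{g,b,n}\to I_n$ onto the symmetric inverse monoid, so $\mathcal{I}\mathcal{M}_{g,b,n}$ is a disjoint union of pieces $M(A,B)$ indexed by subsets $A,B\subseteq Q_n$ with $|A|=|B|$, where $M(A,B)$ consists of the isotopy classes carrying $A$ bijectively onto $B$; composition combines these pieces exactly as in $I_n$, the effective domain of a product being governed by the intersection of the relevant image and domain sets. In particular each diagonal piece $M(A,A)$ is closed under the operation, contains the class $e_A$ of the identity homeomorphism tracked on $A$, and admits inverses via the inverse homeomorphism, so $M(A,A)$ is a group with identity $e_A$. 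Regularity is then immediate: for $[f]\in M(A,B)$ represented by a homeomorphism $f$, take $[f^{-1}]\in M(B,A)$; a direct check on marked points and isotopy classes shows that one of the products $[f^{-1}][f]$, $[f][f^{-1}]$ equals $e_A$ and the other $e_B$, whence $[f][f^{-1}][f]=[f]$ and $[f^{-1}][f][f^{-1}]=[f^{-1}]$.

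The heart of the argument is the description of the idempotents. If $[e]^2=[e]$, then $\tau([e])$ is an idempotent of $I_n$, hence a partial identity on some subset $A$, so $[e]$ lies in the diagonal piece $M(A,A)$. Since $M(A,A)$ is a group, its only idempotent is its identity, and therefore $[e]=e_A$. Thus the idempotents of $\mathcal{I}\mathcal{M}_{g,b,n}$ are precisely the classes $\{e_A:A\subseteq Q_n\}$, one for each subset. Because composing the identity homeomorphism with itself over $A$ and over $B$ simply tracks the intersection $A\cap B$, we get $e_Ae_B=e_{A\cap B}=e_Be_A$, so the idempotents commute. Regularity together with commuting idempotents yields that $\mathcal{I}\mathcal{M}_{g,b,n}$ is inverse.

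I expect the main obstacle to be the idempotent analysis, and specifically the claim that each diagonal piece $M(A,A)$ is genuinely a group, so that its sole idempotent is $e_A$. This demands care that the isotopy relation is compatible with the marked-point tracking and that no exotic isotopy class sitting over a partial identity can be idempotent without being $e_A$; once the group structure of $M(A,A)$ is secured, the triviality of idempotents in a group closes the argument at once.
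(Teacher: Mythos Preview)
Your proof is correct and takes a different, more careful route than the paper. The paper's argument is essentially two lines: represent an element by a homeomorphism $h$, take the inverse homeomorphism $h^{-1}$, and observe that the identities $a=aba$ and $b=bab$ hold. Strictly speaking this only exhibits \emph{some} inverse and does not address the uniqueness required by the definition of an inverse monoid. Your route via the standard characterization (regular plus commuting idempotents) closes exactly this gap. The analysis of idempotents---pushing down to $I_n$ via $\tau$, noting that each diagonal piece $M(A,A)$ is a genuine group (a mapping class group relative to $A$), and hence that the idempotents are precisely the $e_A$---is correct and yields structural information the paper does not record; the commutation $e_A e_B = e_{A\cap B} = e_B e_A$ then follows at once. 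The trade-off is length: the paper's argument suffices for a reader willing to accept that the homeomorphism inverse is the only reasonable candidate, whereas your argument is self-contained.
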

\begin{proof} Each element of $\mathcal I \mathcal {M}_{g,b,n}$ is represented 
by a homeomorphism $h$ of $S_{g,b,n}$. So, take an inverse of $h$ and get the 
identities (\ref{eq:reg_v_n}) and (\ref{eq:inv}).
\end{proof}
We call the monoid $\mathcal I \mathcal {M}_{g,b,n}$ the {\it inverse mapping class monoid}.
If $g=0 $ and $b=1$ we get the inverse braid monoid. In 
the general case $\mathcal I \mathcal {M}_{g,b,n}$ the role of the empty braid plays the
mapping class group  $ \mathcal {M}_{g,b}$ (without fixed points). 

We remind that a monoid $M$ is {\it factorisable} if $M= EG$ where $E $ is a set of 
idempotents of $M$ and $G$ is a subgroup of $M$. 

\begin{Proposition} The monoid
$\mathcal I \mathcal {M}_{g,b,n}$ 
can be written in the form
$$\mathcal I \mathcal {M}_{g,b,n} = E \mathcal {M}_{g,b,n},$$
where $E $ is a set of 
idempotents of $\mathcal I \mathcal {M}_{g,b,n}$  and $\mathcal {M}_{g,b,n}$ is 
the corresponding mapping class group. So this monoid is factorisable.
\end{Proposition}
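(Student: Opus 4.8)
The plan is to exhibit, for each subset $A\subseteq Q_n$, an explicit idempotent $\epsilon_A$, and then to show that every element of $\mathcal I\mathcal M_{g,b,n}$ factors as one of these idempotents times a genuine mapping class. First I would describe the idempotents. For a subset $A\subseteq Q_n$ let $\epsilon_A$ be the isotopy class represented by the identity homeomorphism of $S_{g,b,n}$ in which only the points of $A$ are recorded, the points of $Q_n\setminus A$ being forgotten. Composing the identity with itself while keeping track only of $A$ returns the same datum, so $\epsilon_A^2=\epsilon_A$ and each $\epsilon_A$ is idempotent; set $E=\{\epsilon_A : A\subseteq Q_n\}$. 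The group part is the maximal subgroup sitting at the full domain and range $Q_n$, namely the ordinary mapping class group $\mathcal M_{g,b,n}$ of homeomorphisms carrying all of $Q_n$ bijectively onto $Q_n$, which is a subgroup of $\mathcal I\mathcal M_{g,b,n}$.

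Next I would take an arbitrary element, represented by a homeomorphism $h$ that maps $\{i_1,\dots,i_k\}$ to $R=\{j_1,\dots,j_k\}\subseteq Q_n$, and complete it to a full mapping class. The key geometric input is that the mapping class group surjects onto the symmetric group of the marked points: for a connected surface every permutation of $Q_n$ is realized by a homeomorphism, supported, say, in a disc containing arcs joining the relevant points. Using this I would fill in the forgotten trajectories of $h$, extending the partial bijection $i_l\mapsto j_l$ to a full permutation of $Q_n$ and realizing it by a homeomorphism $g$ that agrees with $h$ on the points $i_1,\dots,i_k$ while carrying the remaining $n-k$ points of $Q_n$ bijectively onto $Q_n\setminus R$. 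Then $g$ represents an element of $\mathcal M_{g,b,n}$.

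The factorisation is then immediate: applying $g$ first and afterwards forgetting every marked point outside $R$, that is, composing with $\epsilon_R$, destroys exactly the $n-k$ trajectories that were filled in and leaves the partial bijection $i_l\mapsto j_l$, which is precisely $h$. Hence $[h]=\epsilon_R\cdot[g]$ in $\mathcal I\mathcal M_{g,b,n}$, and since $[h]$ was arbitrary we obtain $\mathcal I\mathcal M_{g,b,n}=E\,\mathcal M_{g,b,n}$, so the monoid is factorisable. This is the natural generalisation of the known factorisability of the inverse braid monoid $IB_n=\mathcal I\mathcal M_{0,1,n}$, where the filling-in amounts to inserting trivial vertical strands and $\epsilon_R$ deletes them again.

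The step I expect to be the main obstacle is the completion in the second paragraph: one must check that $h$ can genuinely be extended to a homeomorphism $g\in\mathcal M_{g,b,n}$ for which $\epsilon_R[g]=[h]$, i.e. that the recorded behaviour of $h$ on $i_1,\dots,i_k$ and the chosen behaviour on the remaining points are simultaneously realized by a single homeomorphism in the correct isotopy class. This reduces to the surjectivity of $\mathcal M_{g,b,n}$ onto $\Sigma_n$ together with an isotopy-extension argument, but it is the place where the partial-map bookkeeping must be handled with care.
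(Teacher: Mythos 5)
Your proposal is correct and takes essentially the same route as the paper: the paper also represents the element by a homeomorphism $h$, finds in its isotopy class a homeomorphism $h_1$ carrying the untracked marked points $Q_n\setminus\{i_1,\dots,i_k\}$ onto $Q_n\setminus\{j_1,\dots,j_k\}$ (your ``completion'' step, which the paper asserts in one line), and factors the element as an idempotent identity class tracking only $k$ of the points times $[h_1]\in\mathcal M_{g,b,n}$. The only cosmetic difference is that you place the idempotent on the range side ($\epsilon_R$) while the paper uses the domain side $\{i_1,\dots,i_k\}$, which is immaterial.
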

\begin{proof} An  element of $\mathcal I \mathcal {M}_{g,b,n}$ is represented 
by a homeomorphism $h$ of $S_{g,b,n}$ which maps  $k$ points,  $k \leq n$, from $Q_n$:  $\{i_1, \dots, i_k\}$ to $k$ points 
$\{j_1, \dots, j_k\}$ from $Q_n$. In the isotopy class of $h$ we find a homeomorphism
$h_1$ which maps arbitrarily $Q_n \setminus \{i_1, \dots, i_k\}$ to 
$Q_n \setminus \{j_1, \dots, j_k\}$. Necessary idempotent element 
is the isotopy class of the identity
homeomorphism which fixes only the points $\{i_1, \dots, i_k\}$. 
\end{proof}

Let $\Delta$ be the Garside's   fundamental word   in the braid 
group $Br_{n}$ \cite{Gar}. It can be defined by the formula:
$$\Delta = \sigma_1 \dots \sigma_{n-1} \sigma_1 \dots \sigma_{n-2} \dots  
\sigma_1 \sigma_2 \sigma_1.$$
If we use Garside's notation $\Pi_t\equiv \sigma_1\dots \sigma_t$, then
$\Delta \equiv \Pi_{n-1} \dots \Pi_1$.
\begin{Proposition} The generators $\epsilon_i$ commute with $\Delta$ in
the following way:
\begin{equation*}
\epsilon_i\Delta = \Delta \epsilon_{n+1-i}.
\end{equation*}
\end{Proposition}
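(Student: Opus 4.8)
The plan is to reduce the statement to a single commutation rule between an idempotent generator and a band generator, and then to push $\epsilon_i$ through the whole word $\Delta$ while tracking how its subscript is transformed. First I would record the elementary rule, which can be read off directly from the balanced presentation (\ref{eq:invbrelations2}): writing $s_i=(i,\,i+1)$ for the $i$-th transposition, the three relations $\epsilon_i\sigma_i=\sigma_i\epsilon_{i+1}$, $\epsilon_{i+1}\sigma_i=\sigma_i\epsilon_i$ and $\epsilon_j\sigma_i=\sigma_i\epsilon_j$ for $|j-i|>1$ are precisely the three cases of the single uniform identity
\begin{equation*}
\epsilon_j\,\sigma_i=\sigma_i\,\epsilon_{s_i(j)}.
\end{equation*}
Thus sliding an idempotent generator past a band generator simply applies the corresponding transposition to its subscript.

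Next I would iterate this rule along a positive word. If $w=\sigma_{i_1}\sigma_{i_2}\cdots\sigma_{i_m}$ is any positive braid word, then carrying $\epsilon_j$ from the left to the right through $w$ one letter at a time yields $\epsilon_j\,w=w\,\epsilon_{\mu_w(j)}$, where $\mu_w$ is the permutation assembled from the $s_{i_k}$ in the order in which they are met. The substance of this step is only bookkeeping: each passage replaces the current subscript by its image under one transposition, so the final subscript is the image of $j$ under the composite of all these transpositions, which (up to the order convention for composing permutations) is exactly $\tau(w)$ or $\tau(w)^{-1}$.

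I would then specialise to $w=\Delta$. By standard Garside theory the permutation underlying the half-twist is the longest element $w_0$ of $\Sigma_n$, namely the order-reversing involution $w_0(i)=n+1-i$. Because $w_0$ is an involution, $w_0=w_0^{-1}$, so the order-of-composition ambiguity left open in the previous step is harmless: in either convention $\mu_\Delta=w_0$, whence $\epsilon_i\,\Delta=\Delta\,\epsilon_{n+1-i}$, as claimed. (A quick check on $n=3$, $\Delta=\sigma_1\sigma_2\sigma_1$, confirms $\epsilon_1\Delta=\sigma_1\epsilon_2\sigma_2\sigma_1=\sigma_1\sigma_2\epsilon_3\sigma_1=\Delta\epsilon_3$.)

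The main obstacle I anticipate is not any hard estimate but pinning down the permutation convention: one must ensure that the subscript transformation produced by moving $\epsilon_i$ rightward through $\Delta$ agrees with $\tau(\Delta)$ acting in the correct direction. The observation that $w_0$ is an involution is exactly what dissolves this difficulty. As a sanity check, and an alternative geometric proof, I would note that $\Delta$ is the rigid half-twist of the punctured disc, carrying the $i$-th puncture to the $(n+1-i)$-th position; hence deleting the $i$-th input string and then half-twisting coincides with half-twisting and then deleting the $(n+1-i)$-th output string, which is the identity $\epsilon_i\Delta=\Delta\epsilon_{n+1-i}$.
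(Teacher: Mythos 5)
Your strategy coincides with the paper's own proof, which is exactly a ``direct calculation using the second, third and the fourth relations in (\ref{eq:invbrelations2})''; packaging that calculation as a uniform transposition rule together with the facts $\tau(\Delta)=w_0$ and $w_0=w_0^{-1}$ is a clean way to organize it. However, your key lemma is not fully justified as stated, and this is a genuine gap. The identity $\epsilon_j\sigma_i=\sigma_i\epsilon_{s_i(j)}$ has \emph{four} cases, not three: $j=i$, $j=i+1$, $|j-i|>1$, and $j=i-1$. In the last case the rule asserts the commutation $\epsilon_{i-1}\sigma_i=\sigma_i\epsilon_{i-1}$, and this is not among the relations of (\ref{eq:invbrelations2}): the commutation relation there is stated only for $|j-i|>1$. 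The omission matters because the push-through genuinely meets this case for every $i\geq 2$: already for $n=3$, $i=2$ one has $\epsilon_2\sigma_1\sigma_2\sigma_1=\sigma_1\epsilon_1\sigma_2\sigma_1$, and the next step needs precisely $\epsilon_1\sigma_2=\sigma_2\epsilon_1$, which is none of the three relations you cite. (Your sanity check uses $i=1$, the single value of $i$ that never encounters this case, so it cannot detect the problem.)

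The gap is real but easily closed, because the missing identity does hold in $IB_n$ and is derivable from (\ref{eq:invbrelations2}). Writing $\epsilon_{i-1}=\sigma_{i-1}\epsilon_i\sigma_{i-1}^{-1}$ (third relation), the claim $\epsilon_{i-1}\sigma_i=\sigma_i\epsilon_{i-1}$ reduces, via the braid relation in the form $\sigma_{i-1}^{-1}\sigma_i\sigma_{i-1}=\sigma_i\sigma_{i-1}\sigma_i^{-1}$, to the identity $\epsilon_i\,\sigma_i\sigma_{i-1}\sigma_i^{-1}=\sigma_i\sigma_{i-1}\sigma_i^{-1}\epsilon_i$, which follows by applying successively $\epsilon_i\sigma_i=\sigma_i\epsilon_{i+1}$, then $\epsilon_{i+1}\sigma_{i-1}=\sigma_{i-1}\epsilon_{i+1}$ (legitimate since $|(i+1)-(i-1)|>1$), then $\epsilon_{i+1}\sigma_i^{-1}=\sigma_i^{-1}\epsilon_i$. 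Alternatively, for $j=1$ the missing case is literally the relation $\epsilon\sigma_i=\sigma_i\epsilon$, $i\geq 2$, of the presentation (\ref{eq:invbrelations}), and the general case follows by conjugation. With this fourth case supplied, your bookkeeping argument is complete and agrees with the paper's; note also that your closing geometric remark (the half-twist carries the puncture $i$ to the position $n+1-i$) is by itself a valid independent proof, since $IB_n$ is by definition the monoid of isotopy classes of partial braids.
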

\begin{proof} Direct calculation using the second, third and the forth relations 
in (\ref{eq:invbrelations2}).
\end{proof}
\begin{Proposition} The center of $IB_n$ consists of the union of the center of the
braid group $Br_n$ (generated by $\Delta^2$) and the empty braid 
$\varnothing = \epsilon_1 \dots \epsilon_n$.
\end{Proposition}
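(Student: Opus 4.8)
The plan is to prove the two inclusions separately. For the inclusion $\langle \Delta^2\rangle \cup \{\varnothing\} \subseteq Z(IB_n)$, I would first note that $\varnothing$ is absorbing: geometrically the empty partial braid carries no strings, so composing it on either side with any partial braid destroys all strings and returns $\varnothing$, whence $\varnothing g = \varnothing = g\varnothing$ for every $g\in IB_n$. For $\Delta^2$ it suffices to check commutation with each generator. It commutes with every $\sigma_i^{\pm1}$ because $\Delta^2$ generates $Z(Br_n)$, and it commutes with each $\epsilon_i$ by the preceding Proposition: applying the relation $\epsilon_i\Delta = \Delta\epsilon_{n+1-i}$ twice gives $\epsilon_i\Delta^2 = \Delta\epsilon_{n+1-i}\Delta = \Delta^2\epsilon_i$. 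Since $IB_n$ is generated by the $\sigma_i^{\pm1}$ together with the $\epsilon_i$, this shows every power $\Delta^{2k}$ lies in $Z(IB_n)$.

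The harder inclusion is $Z(IB_n)\subseteq \langle\Delta^2\rangle\cup\{\varnothing\}$, and here I would exploit the projection $\tau_n\colon IB_n\to I_n$ of (\ref{eq:tauIBn}). As $\tau_n$ is a surjective homomorphism of monoids, it carries central elements to central elements, so any $z\in Z(IB_n)$ satisfies $\tau_n(z)\in Z(I_n)$. The key step is therefore to determine $Z(I_n)$. I would argue that a central partial bijection $s$, with domain $D$ and range $R$, must have $\pi s\pi^{-1}=s$ for every full permutation $\pi\in\Sigma_n\subset I_n$; since $\pi s\pi^{-1}$ has domain $\pi(D)$ and range $\pi(R)$, both $D$ and $R$ are invariant under all of $\Sigma_n$. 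The only $\Sigma_n$-invariant subsets of $\{1,\dots,n\}$ being $\varnothing$ and $\{1,\dots,n\}$, the map $s$ is either the empty partial bijection or a genuine permutation; in the latter case it is central in $\Sigma_n$, hence the identity for $n\geq 3$. Thus $Z(I_n)=\{\mathrm{id},\varnothing\}$.

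It then remains to lift this dichotomy through the decomposition $IB_n=\amalg_{I_k,J_k} Br(I_k,J_k)$ from the proof of Theorem~\ref{theo:isoendo}. Recalling that an element of $Br(I_k,J_k)$ induces a partial permutation with domain $I_k$ and range $J_k$, I would split into two cases. If $\tau_n(z)=\varnothing$, then $I_k=J_k=\varnothing$, so $z$ lies in the trivial group $Br(\varnothing,\varnothing)=\{\varnothing\}$ and $z=\varnothing$. If $\tau_n(z)=\mathrm{id}$, then $I_k=J_k=\{1,\dots,n\}$, so $z$ belongs to the full braid group $Br_n\subset IB_n$; since $z$ commutes in particular with all of $Br_n$, we conclude $z\in Z(Br_n)=\langle\Delta^2\rangle$. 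Combining the two cases yields the reverse inclusion.

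I expect the main obstacle to be the determination of $Z(I_n)$ together with the careful bookkeeping of domains and ranges under $\tau_n$; once the center of the symmetric inverse monoid is pinned down, the lifting step is essentially formal, relying only on the fact that the full braid group occurs as one block $Br(\{1,\dots,n\},\{1,\dots,n\})$ of the groupoid decomposition and that the empty block is trivial.
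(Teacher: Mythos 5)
Your proof is correct, but it takes a genuinely different route from the paper's. The paper's proof of the hard inclusion is a one-step counterexample construction carried out entirely inside $IB_n$: given a central element $c$ with starting points $I_k$ and ending points $J_k$, where $0<k<n$, it takes the one-string partial braid $x$ that starts in the complement of $J_k$ and ends in $I_k$; then one of the products $cx$, $xc$ is the empty braid while the other is not, a contradiction. (The remaining cases $k=n$ and $k=0$ are left implicit there and are handled exactly as in your final step: a full braid central in $IB_n$ is central in $Br_n$, and a partial braid with no strings is $\varnothing$.) You instead push a central element forward along $\tau_n\colon IB_n\to I_n$, compute $Z(I_n)=\{\mathrm{id},\varnothing\}$ by conjugating with permutations, and lift the dichotomy back through the block decomposition of $IB_n$ into the groups $Br(I_k,J_k)$. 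The two mechanisms are cousins --- your conjugation by $\pi\in\Sigma_n$ moves domains and ranges around, while the paper's multiplication by a one-string partial braid kills strings --- but yours factors the problem through the symmetric inverse monoid. What each buys: the paper's argument is shorter and self-contained in $IB_n$; yours is more systematic, isolates the standard computation of $Z(I_n)$, and transfers verbatim to the other partial-braid monoids of the paper ($IB_n(S^2)$, $IBB_n$, $IBP_n$), each of which also surjects onto $I_n$. Your verification of the easy inclusion (absorbency of $\varnothing$, and centrality of $\Delta^2$ via two applications of $\epsilon_i\Delta=\Delta\epsilon_{n+1-i}$) is exactly what the paper compresses into the sentence ``The given element lie in the center'' together with the preceding Proposition. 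One shared caveat: both arguments need $n\geq 3$ for $Z(Br_n)=\langle\Delta^2\rangle$ (you note the analogous point for $\Sigma_n$); for $n=2$ the statement should be read literally with $\langle\Delta^2\rangle$ in place of $Z(Br_2)$.
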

\begin{proof} The given element lie in the center. Suppose that there are 
other ones. Let $c$ be one of them. It is a partial braid with starting points
$I_k =\{i_1,\dots, i_k\}$ and ending points $J_k =\{j_1,\dots, j_k\}$. 
Take  the one-string partial braid $x$ that starts in the complement
of $J_k$ and ends in $I_k$.  Then $cx$ is the empty braid, while $xc$ is not. 
\end{proof}
Let $\mathcal E$ be the monoid generated by one idempotent generator $\epsilon$ . 
\begin{Proposition} The abelianisation of  $IB_n$ is isomorphic to
$\mathcal E \oplus \Z$. The canonical map
\begin{equation*}
a: IB_n \to \mathcal E \oplus \Z
\end{equation*}
is given by the formula:
\begin{equation*}
\begin{cases}
a(\epsilon_i) = \epsilon ,\\
a(\sigma_i) = 1 .
\end{cases}
\end{equation*}
\end{Proposition}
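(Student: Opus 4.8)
The plan is to identify $a$ with the abelianisation map by computing the universal commutative quotient of $IB_n$ directly from a presentation. Since the abelianisation of a monoid given by generators and relations is obtained by adjoining all commutativity relations and then simplifying, this reduces to a finite relation-chase. I would work with the balanced presentation (\ref{eq:invbrelations2}), whose generators are the $\sigma_i^{\pm 1}$ and the $\epsilon_i$, because it treats the idempotents symmetrically.

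First I would verify that $a$ respects the defining relations, so that it descends to a well-defined homomorphism onto $\mathcal E \oplus \Z$; write $t$ for the generator of the $\Z$ part and $\epsilon$ for the idempotent. The braid relations (\ref{eq:brelations}) and the far-commutation relations $\epsilon_j\sigma_i = \sigma_i\epsilon_j$ are automatic once the target is commutative; the transport relations $\epsilon_i\sigma_i = \sigma_i\epsilon_{i+1}$ and $\epsilon_{i+1}\sigma_i = \sigma_i\epsilon_i$ hold because all $\epsilon_i$ share the single image $\epsilon$; the idempotency $\epsilon_i = \epsilon_i^2$ becomes $\epsilon^2 = \epsilon$; and the remaining families $\epsilon_{i+1}\sigma_i^2 = \epsilon_{i+1}$ and $\epsilon_i\epsilon_{i+1}\sigma_i = \epsilon_i\epsilon_{i+1}$ hold because $\epsilon$ absorbs $t$, i.e. $\epsilon t = \epsilon$. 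Surjectivity of $a$ is immediate.

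Next I would compute the abelianisation of the presentation to see that $a$ is injective on it. Imposing commutativity, the braid relations collapse all $[\sigma_i]$ to a single invertible element $t$, so the group part contributes $\Z = \langle t\rangle$; the relation $\epsilon_i\sigma_i = \sigma_i\epsilon_{i+1}$ becomes $[\epsilon_i]\,t = t\,[\epsilon_{i+1}]$, and cancelling the invertible $t$ forces $[\epsilon_i]=[\epsilon_{i+1}]$, so all idempotent generators coincide with one element $\epsilon$; the relation $\epsilon_i=\epsilon_i^2$ gives $\epsilon^2=\epsilon$; and $\epsilon_i\epsilon_{i+1}\sigma_i=\epsilon_i\epsilon_{i+1}$ becomes $\epsilon^2 t=\epsilon^2$, that is $\epsilon t=\epsilon$ (whence $\epsilon_{i+1}\sigma_i^2=\epsilon_{i+1}$ is redundant, as $\epsilon t^2=\epsilon$ follows). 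Thus, as a commutative monoid, the abelianisation is presented by $t,t^{-1},\epsilon$ subject only to $\epsilon^2=\epsilon$ and $\epsilon t=\epsilon$, which is exactly $\mathcal E\oplus\Z$: the copy of $\Z$ generated by $t$ together with the idempotent $\epsilon$ that annihilates the grading. The map $a$ induces the identity on this quotient, giving the isomorphism.

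The point to get right is the absorption relation $\epsilon t=\epsilon$: geometrically it expresses that once a string is deleted the exponent sum of the $\sigma_i$ ceases to be an invariant, so every partial (non-full) braid is identified in the abelianisation with the single idempotent $\epsilon$, while the full braids still record their exponent sum in $\Z$. The one subtlety is to be sure the abelianised relations force nothing beyond $\epsilon^2=\epsilon$ and $\epsilon t=\epsilon$, and in particular that $t$ retains infinite order; this is guaranteed precisely by the well-defined surjection $a$ onto $\mathcal E\oplus\Z$, in which $t$ has infinite order and $\epsilon\neq 1$. Combining the two directions yields the asserted isomorphism.
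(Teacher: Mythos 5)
The paper offers no proof of this Proposition (it is stated with a bare $\square$), so your argument has to stand on its own, and it does not: there is a genuine error at the identification step. Your relation-chase in the abelianised presentation is correct, and it is the right method: imposing commutativity on (\ref{eq:invbrelations2}) collapses all $[\sigma_i]$ to a single invertible element $t$, all $[\epsilon_i]$ to a single idempotent $\epsilon$, and leaves exactly the relations $\epsilon^2=\epsilon$ and $\epsilon t=\epsilon$. But the commutative monoid $\langle t,t^{-1},\epsilon \mid \epsilon^2=\epsilon,\ \epsilon t=\epsilon\rangle$ is \emph{not} $\mathcal E\oplus\Z$: its underlying set is $\{t^n : n\in\Z\}\cup\{\epsilon\}$, i.e.\ it is $\Z$ with an absorbing zero adjoined, whereas the direct sum (equivalently, direct product) $\mathcal E\oplus\Z$ has underlying set $\{1,\epsilon\}\times\Z$ and, like any direct sum, carries no cross-relation between the summands. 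In $\mathcal E\oplus\Z$ one has $(\epsilon,0)\cdot(1,1)=(\epsilon,1)\neq(\epsilon,0)$, so $\epsilon$ does \emph{not} absorb $t$ there. That false absorption is precisely what you invoked in your first step: the relations $\epsilon_{i+1}\sigma_i^2=\epsilon_{i+1}$ and $\epsilon_i\epsilon_{i+1}\sigma_i=\epsilon_i\epsilon_{i+1}$ are not respected by $a(\epsilon_i)=(\epsilon,0)$, $a(\sigma_i)=(1,1)$, since $a(\epsilon_{i+1}\sigma_i^2)=(\epsilon,2)$. So $a$ is not a well-defined homomorphism into the direct sum at all, and your closing appeal to ``the well-defined surjection $a$'' to see that $t$ has infinite order is circular.

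What your (correct) computation actually reveals is that the Proposition cannot be true with $\oplus$ read in the standard way: $IB_n$ contains the empty braid $\varnothing=\epsilon_1\cdots\epsilon_n$, which is an absorbing element, and the image of an absorbing element under any surjective homomorphism is absorbing; hence the abelianisation of $IB_n$ must contain a zero, while $\mathcal E\oplus\Z$ contains none (no $(x,n)$ can satisfy $(x,n)(1,1)=(x,n)$). The correct target is $\Z^0$, the group $\Z$ with a zero adjoined --- equivalently, the quotient of $\mathcal E\oplus\Z$ identifying $(\epsilon,m)$ with $(\epsilon,n)$ for all $m,n$ --- via $\sigma_i\mapsto 1$, $\epsilon_i\mapsto 0$; this map does respect all the relations, is surjective, and your presentation computation then identifies it with the abelianisation, simultaneously showing that $t$ retains infinite order and $\epsilon\neq 1$. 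Your write-up instead silently redefines $\mathcal E\oplus\Z$ to mean this quotient (``the idempotent $\epsilon$ that annihilates the grading''); the monoid you have in mind is the right one, but a proof must flag that it contradicts the stated isomorphism type rather than reinterpret the notation so that the verification appears to go through.
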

\hfill $\square$

Let $\epsilon_{k+1, n}$ denote the partial braid with the trivial first $k$ strings 
and the absent rest $n-k$ strings. It can be expressed using the generator $\epsilon$ 
or the generators $\epsilon_i$ as follows
\begin{equation} \epsilon_{k+1, n}=
\epsilon\sigma_{n-1}\dots\sigma_{k+1}\epsilon \sigma_{n-1}\dots\sigma_{k+2}
\epsilon\dots \epsilon\sigma_{n-1}\sigma_{n-2}\epsilon \sigma_{n-1}\epsilon,
\end{equation}
\begin{equation} \epsilon_{k+1, n}=
\epsilon_{k+1}\epsilon_{k+2} \dots \epsilon_{n},
 \label{eq:espki}
\end{equation}
It was proved in \cite{EL} the 
every partial braid has a representative of the form
\begin{equation} \sigma_{i_1}\dots\sigma_{1}\dots \sigma_{i_k}\dots\sigma_{k}
\epsilon_{k+1, n}x \epsilon_{k+1, n}\sigma_{k}\dots\sigma_{j_k}\dots\sigma_{1}
\dots\sigma_{j_1},
 \\ 
\label{eq:form_inv}\end{equation}
\begin{equation} k\in \{0,\dots, n\}, x\in Br_k,
0\leq i_1<\dots<i_k\leq n-1  \ 
\text{and} \  0\leq j_1<\dots<j_k\leq n-1.
\end{equation}
Note that in the formula (\ref{eq:form_inv}) we can use delete one of the 
$\epsilon_{k+1,n}$, but we shall use the form (\ref{eq:form_inv}) because of 
convenience: two symbols $\epsilon_{k+1,n}$ serve as markers to distinguish
the elements of $Br_k$.
We can put the element $x\in Br_k$ in the Markov normal form  \cite{Mar2}
and get the corresponding {\it Markov normal form for the inverse braid monoid} 
$IB_n$. The same way for the Garside normal form.

Let us remind the mains point of Garside's construction. 
Essential role in Garside work  plays the monoid of 
\emph{positive} braids $Br_n^+$, that is the monoid which has a presentation
with generators  $\sigma_i$, 
$i=1, ..., n$ and relations (\ref{eq:brelations}). In other words each
element of this monoid can be represented as a word on the elements 
$\sigma_i$, $i=1, ..., n$ with no entrances of  $\sigma_i^{-1}$.
Two positive words $V$ and $W$ in the
alphabet $\{\sigma_i$, $(i=1,\dots,n-1) \}$
will be said to be {\it positively equal} if they are equal as elements
of $Br_n^+$. Usually this is written as $V\doteq W$.

Among positive words on the alphabet  $\{\sigma_1 \dots \sigma_n\}$ let us 
introduce a lexicographical ordering with the condition that  
$\sigma_1 < \sigma_2 < \dots < \sigma_n $. For a positive word $V$ the 
\emph{base} of $V$ is the smallest positive word which is positively equal
to $V$. The base is uniquely determined. If a positive word $V$ is prime 
to $\Delta$, then for the base of $V$ the notation $\overline{V}$  will
be used.
\begin{Theorem}
 Every word $W$ in $IBr_{n}$ can be uniquely written in
the form 
\begin{equation} \sigma_{i_1}\dots\sigma_{1}\dots \sigma_{i_k}\dots\sigma_{k}
\epsilon_{k+1, n}x \epsilon_{k+1, n}\sigma_{k}\dots\sigma_{j_k}\dots\sigma_{1}
\dots\sigma_{j_1},
 \\ 
\end{equation}
\begin{equation} k\in \{0,\dots, n\}, x\in Br_k,
0\leq i_1<\dots<i_k\leq n-1  \ 
\text{and} \  0\leq j_1<\dots<j_k\leq n-1.
\end{equation} 
where $x$ is written in the Garside normal form for $Br_k$
$$\Delta^m \overline{V},$$
where $m$ is an  integer.
\label{Theorem:garnfm}
\end{Theorem}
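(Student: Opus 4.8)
The plan is to separate the statement into an existence part and a uniqueness part, treating the overall shape of the word and the inner braid $x$ on different footings. For existence, the form
$$\sigma_{i_1}\dots\sigma_{1}\dots \sigma_{i_k}\dots\sigma_{k}\,\epsilon_{k+1,n}\,x\,\epsilon_{k+1,n}\,\sigma_{k}\dots\sigma_{j_k}\dots\sigma_{1}\dots\sigma_{j_1}$$
with $x\in Br_k$ is already available from \cite{EL} as (\ref{eq:form_inv}); it remains only to rewrite the factor $x$ in its Garside normal form $\Delta^m\overline V$, which is possible for every element of $Br_k$ by Garside's theorem \cite{Gar}. So the real content is uniqueness, and I would prove it by peeling off the data $(k,I_k,J_k)$, then $x$, then $(m,\overline V)$ in that order.

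First I would fix the outer combinatorial data. Applying the canonical projection $\tau_n\colon IB_n\to I_n$ to the element $W$ produces a partial permutation whose domain and image are invariants of $W$, independent of the chosen expression. Reading the normal form through $\tau_n$ (each $\sigma$ becomes the corresponding transposition and $\epsilon_{k+1,n}$ deletes the last $n-k$ points), one checks that the domain and image of $\tau_n(W)$ recover exactly the sets $I_k=\{i_1<\dots<i_k\}$ and $J_k=\{j_1<\dots<j_k\}$; in particular $k=|I_k|$ is determined. Hence $k$, the set $I_k$ and the set $J_k$ are forced by $W$ alone. Because the ordering conditions $i_1<\dots<i_k$ and $j_1<\dots<j_k$ are imposed, the positive prefix $\sigma_{i_1}\dots\sigma_1\dots\sigma_{i_k}\dots\sigma_k$ and the positive suffix $\sigma_k\dots\sigma_1\dots\sigma_{j_1}$ are then the unique monotone positive permutation braids carrying $\{1,\dots,k\}$ onto $I_k$ and onto $J_k$, and are thus pinned down by these sets.

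With $k,I_k,J_k$ fixed, I would invoke the splitting $IB_n=\amalg_{I_k,J_k}Br(I_k,J_k)$ established in the proof of Theorem~\ref{theo:isoendo}, together with the bijection $\rho\colon Br_k\to Br(I_k,J_k)$ appearing there. The element $W$ lies in a single summand $Br(I_k,J_k)$, and the middle factor $\epsilon_{k+1,n}x\epsilon_{k+1,n}$ framed by the (now unique) sorting braids realizes $\rho(x)$; since $\rho$ is a bijection, $x\in Br_k$ is uniquely determined by $W$. Finally, Garside's theorem gives a unique expression $x=\Delta^m\overline V$ with $m\in\Z$ and $\overline V$ the base of a word prime to $\Delta$, so the integer $m$ and the word $\overline V$ are determined as well. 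Equality of two competing normal forms can, if desired, be certified through the monomorphism $\phi_n$ of Theorem~\ref{theo:isoendo}.

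The step I expect to be the main obstacle is the middle one: showing that the prefix and suffix positive words are genuinely canonical once $I_k$ and $J_k$ are known, and that stripping them off returns precisely $\rho(x)$ rather than a coset-ambiguous representative. Concretely, I must rule out that two distinct inner braids $x\neq x'$ yield the same $W$ by being compensated through a reshuffling of the sorting factors; the ordering constraints are exactly what forbids this, but converting that observation into a clean argument uses the uniqueness of positive permutation braids (the Proposition on $IB_n^+$) together with the injectivity of $\rho$, and it is here that the care is concentrated. Once the outer data and $x$ have been isolated, the appeal to Garside uniqueness is routine.
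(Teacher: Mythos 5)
Your proposal is correct and follows essentially the same route as the paper's (much terser) proof: the paper likewise observes that the sorting prefix and suffix are uniquely determined by the element of $IB_n$ and then reduces uniqueness of the middle factor to the Garside normal form in $Br_k$. Your additional steps --- recovering $k$, $I_k$, $J_k$ from the partial permutation $\tau_n(W)$ and isolating $x$ via the bijection $\rho\colon Br_k\to Br(I_k,J_k)$ from the splitting of $IB_n$ --- are exactly the justifications the paper leaves implicit, so they expand rather than replace its argument.
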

\begin{proof}
Note that the elements $\sigma_{i_1}\dots\sigma_{1}\dots \sigma_{i_k}
\dots\sigma_{k}$ and $ \sigma_{k}\dots\sigma_{j_k}\dots\sigma_{1}
\dots\sigma_{j_1}$ are uniquely determined by a given element of $IB_n$ 
(written as a word $W$ in the alphabet $A= \{\sigma_i, \sigma_i^{-1} $, 
$i=1,\dots,n-1,$ $\epsilon\}$). Then Theorem follows from the existence of 
the Garside normal form for $Br_k$.
\end{proof}
Theorem~\ref{Theorem:garnfm} is evidently true also for the presentation with 
$\epsilon_i$,
$i= 1,\dots n$. In this case the elements $\epsilon_{k+1,n}$ are expressed
by (\ref{eq:espki}). 

The form of a word $W$ established in this theorem we call the
\emph{Garside left normal form for the inverse braid monoid} $IB_n$
and the 
index $m$ we call the \emph{power} of $W$. The same way the
\emph{Garside right normal form for the inverse braid monoid} is defined and 
the corresponding
variant of Theorem~\ref{Theorem:garnfm} is true. 
\begin{Theorem} 
The necessary and sufficient condition that two words in  
$IB_{n}$ are equal is that their Garside normal forms are identical.
The Garside normal form  gives a solution to the word problem in the braid
group.
\label{Theorem:gws}
\end{Theorem}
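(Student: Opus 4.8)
The plan is to obtain this statement as an immediate corollary of Theorem~\ref{Theorem:garnfm}, whose uniqueness assertion already supplies the essential content. For the ``necessary and sufficient'' claim I would argue both directions from that uniqueness at once. If two words $W_1$ and $W_2$ represent the same element of $IB_n$, then, since by Theorem~\ref{Theorem:garnfm} each element admits exactly one Garside normal form, the normal forms computed from $W_1$ and from $W_2$ must coincide. Conversely, if the two normal forms are identical as words, then $W_1$ and $W_2$ are each equal in $IB_n$ to this common form, hence equal to one another. Thus equality of elements is equivalent to letter-by-letter identity of their normal forms.

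The substantive task is to check that the normal form is \emph{effectively} computable from an arbitrary word $W$ in the generators, so that comparing normal forms genuinely decides the word problem for $IB_n$. I would split the computation into three stages. First, compute the partial permutation $\tau_n(W)\in I_n$; this is immediate either from the defining relations or from the action of the monomorphism $\phi_n$ of Theorem~\ref{theo:isoendo}, and it fixes the integer $k$ together with the input set $I_k$ and the output set $J_k$. By the uniqueness of the outer factors recorded in the proof of Theorem~\ref{Theorem:garnfm}, this data already pins down the two flanking positive permutation braids. Second, strip off these outer factors and the markers $\epsilon_{k+1,n}$ so as to isolate the core braid $x\in Br_k$. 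Third, run the classical Garside algorithm inside $Br_k$ to bring $x$ into the form $\Delta^m\overline{V}$; this final step is precisely the known Garside solution of the word problem for the braid group.

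Every stage is deterministic and terminates, so the complete normal form of $W$ is computable, and equality of two input words is decided by comparing the results. The step I expect to be the main obstacle is the second one: I must verify that the isolation of the core $x$ is well defined independently of the particular word chosen to represent the element, and that it can be performed effectively. This is exactly where the monomorphism $\phi_n$ of Theorem~\ref{theo:isoendo} earns its keep, since reading the action of $W$ on the free generators $x_1,\dots,x_k$ associated with the surviving strings recovers $x$ as an element of $Br_k\subset\Aut F_k$; the uniqueness of the flanking factors from Theorem~\ref{Theorem:garnfm} then guarantees that the stripping is canonical. With the core correctly isolated, the remainder is a routine appeal to classical Garside theory.
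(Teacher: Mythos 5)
Your reduction of the biconditional to the uniqueness statement of Theorem~\ref{Theorem:garnfm} is fine, and your overall pipeline (read off the partial permutation, hence the flanking factors; isolate the core $x\in Br_k$; normalize with Garside) is in the same spirit as the paper's. The genuine gap is at the interface between your second and third stages. Garside's algorithm takes as input a \emph{word} in the generators $\sigma_1,\dots,\sigma_{k-1}$, but what your second stage produces is $x$ only as an automorphism of a free group: the restriction of $\phi_n(W)$ to the generators indexed by the surviving strings. Theorem~\ref{theo:isoendo} guarantees that this automorphism determines $x$ uniquely, but it does not hand you any braid word representing $x$, and without such a word there is nothing for the Garside machinery to run on; ``element of $Br_k\subset\Aut F_k$'' and ``word in the Garside generators'' are separated by exactly the step you never supply. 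Note also that if the only tool you use is $\phi_n$, you have merely reproduced the earlier solution of the word problem (two words are equal iff their actions on the free generators agree), which is not what this theorem asserts: its content is that the \emph{Garside normal form} itself is effectively computable and decides equality.

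The paper fills precisely this hole by invoking the algorithm implicit in \cite{EL}: rewriting with the relations (\ref{eq:invbrelations}) transforms an arbitrary word $W$ in the alphabet $A$ into the form (\ref{eq:form_inv}), and this rewriting yields an explicit word in $\sigma_1,\dots,\sigma_{k-1}$ for the core $x$, to which Garside's algorithm is then applied. Your argument needs either that rewriting step, or else an effective procedure converting a braid automorphism of $F_k$ into a braid word --- for instance Artin's characterization of braid automorphisms, or even brute-force enumeration of words in $Br_k$ compared against the target automorphism, which terminates because $x$ lies in the image and equality of automorphisms of a free group is decidable. Any of these would complete your proof; as written, the concluding ``routine appeal to classical Garside theory'' conceals the missing conversion.
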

\begin{proof}
As we noted in the proof of the previous Theorem the elements 
$\sigma_{i_1}\dots\sigma_{1}\dots \sigma_{i_k}
\dots\sigma_{k}$ and $ \sigma_{k}\dots\sigma_{j_k}\dots\sigma_{1}
\dots\sigma_{j_1}$ are uniquely determined. Also in  \cite{EL} 
(implicitly) there was given an algorithm how to obtain the form 
(\ref{eq:form_inv}) for an arbitrary word $W$ in the alphabet $A$.
Then combining it with the Garside algorithm we get a solution of the 
word problem for the inverse braid monoid. 
\end{proof}
 
Garside normal form for the braid groups was precised in the subsequent 
works of S.~I.~Adyan \cite{Ad},
 W.~Thurston \cite{E_Th}, E.~El-Rifai and H.~R.~Morton \cite{EM}. 
Namely, there was introduced 
the \emph{left-greedy form} (in the terminology of
W.~Thurston \cite{E_Th}) 
\begin{equation*}
\Delta^t A_1 \dots A_k,
\end{equation*}
where $A_i$ are the successive
possible longest \emph{fragments of the word} $\Delta$ (in the terminology 
of S.~I.~Adyan \cite{Ad}) or \emph{positive permutation braids} (in the 
terminology of E.~El-Rifai and H.~R.~Morton \cite{EM}). Certainly, the same 
way the \emph{right-greedy form} is defined. These greedy forms are
defined for the inverse braid monoid the same way.

Let us consider the elements $m\in IB_n$ satisfying  the equation:
\begin{equation}
\epsilon_i m = \epsilon_i.
\label{eq:imak}
\end{equation}
Geometrically this means that removing the string  (if it exists) that 
starts at the point with the number $i$ we get a trivial braid on the 
rest $n-1$ strings. It is equivalent to the condition
\begin{equation}
m \epsilon_{\tau (m)(i)}  = \epsilon_{\tau(m)(i)},
\label{eq:imak2}
\end{equation}
where $\tau$ is the canonical map to the symmetric monoid (\ref{eq:tauIBn}).
With the exception of $\epsilon_i$ itself all such elements belong to
$Br_n$. We call such braids as $i$\,-{\it Makanin} and denote the subgroup
of $i$\,-Makanin braids by $A_i$. The  subgroups $A_i$, $i=1, \dots, n$,
 are conjugate
\begin{equation} 
A_i = \sigma_{i-1}^{-1} \dots \sigma_1^{-1} A_1\sigma_1 \dots \sigma_{i-1}
\label{eq:Aimak}
\end{equation} 
free subgroups. The group $A_1$ is freely generated by the set 
$\{x_1, \dots, x_{n-1}\}$ \cite{Joh1}, where 
\begin{equation}
x_i = 
\sigma_{i-1}^{-1} \dots \sigma_1^{-1} \sigma_1^2\sigma_1 \dots \sigma_{i-1}.
\label{eq:freegmak}
\end{equation} 
The intersection of all subgroups of $i$\,-Makanin braids is the group of 
Makanin braids
\begin{equation} 
Mak_n =\cap_ {i=1}^{n} A_i.
\label{eq:mak}
\end{equation}
That is the same as $m\in Mak_n$ if and only if the equation (\ref{eq:imak})
holds for all $i$.

\section{Monoids of partial generalised braids\label{sec:gen}}

Construction of partial braids can be applied to various generalisations of 
braids, namely to those where geometric or diagrammatic construction of braids 
takes place.
Let $S_g$ be a surface of genus $g$ probably with boundary components and 
punctures. We consider partial braids lying in a layer between two such surfaces:
$S_g\times I$ and take a set of isotopy classes of such braids.   
We get a monoid of partial braid of a surface $S_g$,  denote it by $IB_n(S_g)$. 
An interesting case is when the surface is a sphere $S^2$. So our partial 
braids are lying in a layer between two concentric spheres. 
It was proved by O.~Zariski \cite{Za1} and then
rediscovered by E.~Fadell and J.~Van Buskirk \cite{FaV} that the braid group
of a sphere has a presentation with generators 
$\sigma_i$, $i=1, ..., n-1$, the same as for the classical braid group satisfying
the braid relations (\ref{eq:brelations})
and the following sphere relation: 
\begin{equation}
\sigma_1 \sigma_2 \dots \sigma_{n-2}\sigma_{n-1}^2\sigma_{n-2} \dots
\sigma_2\sigma_1 =1.
\label{eq:spherelation}
\end{equation}
\begin{Theorem} We get a presentation of the monoid $IB_n(S^2)$ if we add to
the presentation (\ref{eq:invbrelations}) or the presentation (\ref{eq:invbrelations2})
of  $IB_n$ the sphere relation (\ref{eq:spherelation}).
It is a factorisable inverse monoid.
\end{Theorem}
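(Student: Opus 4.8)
The plan is to reduce the statement to the corresponding facts about sphere braid groups by combining the disjoint-union description of the monoid with the Easdown--Lavers normal form. First I would record the geometric picture: exactly as $IB_n=\amalg_{I_k,J_k}Br(I_k,J_k)$, deleting strings of partial braids lying in $S^2\times I$ yields a decomposition
$$IB_n(S^2)=\amalg_{I_k,J_k\subset\{1,\dots,n\}}Br(I_k,J_k;S^2),$$
where each component is a copy of the sphere braid group $Br_k(S^2)$. All of the defining relations — the braid relations, the $\epsilon$-relations of (\ref{eq:invbrelations}), and the sphere relation (\ref{eq:spherelation}) — hold for partial braids on the sphere, so the monoid $M$ presented by (\ref{eq:invbrelations}) together with (\ref{eq:spherelation}) admits a surjective homomorphism $M\to IB_n(S^2)$ respecting this decomposition. (Since the two presentations of $IB_n$ are interderivable, it suffices to argue with the generators $\epsilon_i$ of (\ref{eq:invbrelations2}).)

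Next I would produce a normal form for $M$. Every word in the generators can be brought, using only the inverse-monoid relations, into the form (\ref{eq:form_inv}) with a braid part $x$; this is precisely the reduction used in Theorem~\ref{Theorem:garnfm} and it never touches the sphere relation. Hence elements of $M$ are parametrized by the pair $(I_k,J_k)$ together with the class of $x$ in $Br_k$ modulo the consequences the relation (\ref{eq:spherelation}) imposes on that component.

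The heart of the matter — and the step I expect to be the main obstacle — is to show that on the component indexed by $(I_k,J_k)$ the single $n$-string sphere relation, together with the $\epsilon$-relations, entails exactly the $k$-string sphere relation $\sigma_1\cdots\sigma_{k-2}\sigma_{k-1}^2\sigma_{k-2}\cdots\sigma_1=1$ for each $k\le n$, and nothing more. Geometrically this is transparent: deleting strings $k+1,\dots,n$ turns ``string $1$ encircling strings $2,\dots,n$'', trivial on $S^2$, into ``string $1$ encircling strings $2,\dots,k$'', still trivial on $S^2$. Algebraically I would realize this by sandwiching the sphere relator between copies of $\epsilon_{k+1,n}$: applying the relations $\epsilon_j\sigma_i=\sigma_i\epsilon_j$ for $|i-j|>1$, $\epsilon_i\sigma_i=\sigma_i\epsilon_{i+1}$, and $\epsilon_{i+1}\sigma_i^2=\epsilon_{i+1}$, one checks that the factors of the relator involving $\sigma_k,\dots,\sigma_{n-1}$ are absorbed by $\epsilon_{k+1,n}$, leaving precisely the $k$-string relator. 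Read in reverse, the same computation shows that no identification beyond the $k$-string sphere relation is forced among elements of $Br_k$, so the $(I_k,J_k)$-component of $M$ is exactly $Br_k(S^2)$.

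Granting this, injectivity is immediate: two normal forms map to the same element of $IB_n(S^2)$ only if they share $(I_k,J_k)$ and have braid parts agreeing in $Br_k(S^2)$, whence they are already equal in $M$; thus $M\cong IB_n(S^2)$. Finally, $IB_n(S^2)$ is an inverse monoid — an inverse of a partial sphere braid is obtained by reflecting it, giving (\ref{eq:reg_v_n}) and (\ref{eq:inv}) — and it is factorisable: any partial sphere braid on $k$ strings is the product of the idempotent deleting the appropriate strings with a full sphere braid obtained by filling in the missing strings, so that $IB_n(S^2)=E\,Br_n(S^2)$ with $E$ the set of idempotents and $Br_n(S^2)$ the component of full braids. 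This is just the factorisability Proposition applied to the inverse mapping class monoid of the $n$-punctured sphere.
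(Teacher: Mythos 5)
Your proposal is correct and takes essentially the same route as the paper: reduce words of the presented monoid to the normal form (\ref{eq:form_inv}), note that the frame parts $\sigma_{i_1}\dots\sigma_{1}\dots \sigma_{i_k}\dots\sigma_{k}$ and $\sigma_{k}\dots\sigma_{j_k}\dots\sigma_{1}\dots\sigma_{j_1}$ are determined by the image in $IB_n(S^2)$, and then compare the braid parts as elements of the sphere braid group $Br_k(S^2)$. The one place where you go beyond the paper is in making explicit the absorption computation $\epsilon_{k+1,n}R_n = R_k\,\epsilon_{k+1,n}$ (with $R_j$ denoting the $j$-string sphere relator of (\ref{eq:spherelation})), which is exactly what licenses applying $Br_k(S^2)$-relations to the braid part $x$ inside the presented monoid; the paper's proof invokes this step (``$x_1$ can be transformed into $x_2$ using relations for the braid groups $Br_k(S^2)$'') without verifying that those relations are consequences of the given presentation.
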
 
\begin{proof} Essentially it is the same as for $IB_n$. 
Denote temporarily by $M_n$ the monoid defined by the presentation
and $IB_n(S^2)$ denotes the monoid of homotopy classes.
We already used that 
every word in the alphabet $A$ is congruent (using the relations
(\ref{eq:invbrelations}) to a word of the form (\ref{eq:form_inv}).
Now note that for the sphere inverse braid monoid the alphabet is the same
and relations for $IB_n$ are included into the set of relations for $IB_n(S^2)$.
As in \cite{EL} the evident map 
$$\Psi : M_n\to IB_n(S^2)$$
is defined and proved that it is onto.
Let us prove that $\Psi$ is a monomorphism. Suppose that for two words
$W_1, W_2 \in M_n$ we have
$$\Psi(W_1 = \Psi (W_2).$$
That means that the corresponding braids are isotopic.
Using relations 
(\ref{eq:invbrelations}) transform the words
$W_1, W_2$ into the form (\ref{eq:form_inv}) 
$$\sigma(i_1,\dots i_k; k) \epsilon_{k+1,n}x \epsilon_{k+1,n}
\sigma(k; j_1,\dots j_k).$$ 
Then  the corresponding fragments $\sigma(i_1,\dots i_k; k)$ and
$\sigma(k, j_1,\dots j_k; k)$ for $W_1$ and $W_2$ coincide.
The  elements $x_1$ of $W_1$ and $x_2 $ of $W_2$, which are the words 
on $\sigma_1, \dots, \sigma_k$, correspond after $\Psi$ to homotopic 
braids on $k$ strings on the sphere $S^2$. So $x_1$ can be transformed into $x_2$
using  relations for the braid groups $Br_k(S^2)$. The words $W_1$ and $W_2$
represent the same element in $M_n$. 
\end{proof}
Another example here is the braid group of a punctured disc which is 
isomorphic to the Artin-Brieskorn braid group of the type $B$ \cite{Bri1},
\cite{Ve6}. With respect to the classical braid group it has an extra generator
$\tau$ and the relations of type $B$:
\begin{equation}
\begin{cases} \tau\sigma_1\tau\sigma_1 &= \sigma_1\tau\sigma_1\tau, \\ 
\sigma_i \sigma_j &=\sigma_j \, \sigma_i, \ \
\text{if} \ \ |i-j|>1,
\end{cases} \label{eq:typbrel}
\end{equation}
Denote by $IBB_n$ the monoid of partial braids of the type $B$.
\begin{Theorem} We get a presentation of the monoid $IBB_n$ if we add to
the presentation (\ref{eq:invbrelations}) or the presentation (\ref{eq:invbrelations2})
of  $IB_n$ one generator $\tau$, the type $B$ relation
(\ref{eq:typbrel}) and the following relations
\begin{equation}
 \begin{cases} 
&\tau\tau^{-1}=\tau^{-1}\tau =1,  \\
&\epsilon_1\tau = \tau\epsilon_1 = \epsilon_1.
\end{cases} \label{eq:IBB}
\end{equation}
It is a factorisable inverse monoid.
\end{Theorem}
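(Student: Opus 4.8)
The plan is to argue exactly as in the proof given above for $IB_n(S^2)$. Denote temporarily by $M_n$ the monoid defined by the presentation in the statement, and let $IBB_n$ be the monoid of isotopy classes of partial braids lying in a layer of the once-punctured disc, the puncture $*$ being the pole around which the generator $\tau$ loops the first string. Write $BB_n$ for the type $B$ braid group, i.e.\ the braid group of this punctured disc. First I would construct the evident map
\[
\Psi : M_n \to IBB_n,
\]
sending each generator to the corresponding partial braid, and verify that the defining relations hold geometrically: the relations of (\ref{eq:invbrelations}) (or (\ref{eq:invbrelations2})) hold as in the ordinary disc, the type $B$ relation (\ref{eq:typbrel}) holds because it already holds in the full group $BB_n$, and $\epsilon_1 \tau = \tau \epsilon_1 = \epsilon_1$ holds because looping an absent first string around the pole produces the same partial braid as simply deleting it. Surjectivity of $\Psi$ is then immediate, since the full type $B$ braids are generated by $\sigma_1,\dots,\sigma_{n-1},\tau$ and every partial braid arises from a full one by the deletions encoded by $\epsilon$.

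The substance of the proof is the injectivity of $\Psi$. Following the reduction used for $IB_n$, I would first show that every word of $M_n$ is congruent, using the relations of (\ref{eq:invbrelations}) together with (\ref{eq:IBB}), to a word in the normal form (\ref{eq:form_inv}),
\[
\sigma(i_1,\dots,i_k;k)\,\epsilon_{k+1,n}\,x\,\epsilon_{k+1,n}\,\sigma(k;j_1,\dots,j_k),
\]
where now the middle factor $x$ lies in the type $B$ braid group $BB_k$ rather than in $Br_k$. The one new ingredient, compared with the disc case, is to absorb every occurrence of $\tau$ into this middle factor: when $\tau$ meets a surviving string it is carried along by the type $B$ relation (\ref{eq:typbrel}) and by the commutations of $\tau$ with the $\sigma_j$, $j\geq 2$; when $\tau$ meets the string that is to be deleted, it is killed by applying the relation $\epsilon_1\tau=\epsilon_1$, suitably conjugated through the intervening $\sigma_i$'s so that the absent string is the one twisting around the pole.

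Now suppose $\Psi(W_1)=\Psi(W_2)$, so the corresponding partial braids of type $B$ are isotopic. After reducing both words to the normal form above, the outer fragments $\sigma(i_1,\dots,i_k;k)$ and $\sigma(k;j_1,\dots,j_k)$ are determined by the underlying partial permutation and hence agree for $W_1$ and $W_2$; the middle factors $x_1,x_2\in BB_k$ therefore map under $\Psi$ to isotopic type $B$ braids on the $k$ surviving strings. Since $BB_k$ is the braid group of the punctured disc, $x_1$ can be transformed into $x_2$ by the relations of $BB_k$, so $W_1$ and $W_2$ represent the same element of $M_n$, and $\Psi$ is an isomorphism. That $IBB_n$ is a factorisable inverse monoid I would then deduce by observing that $IBB_n$ is an instance of the inverse mapping class monoid $\mathcal{I}\mathcal{M}_{g,b,n}$ of Section~\ref{sec:prop} for the punctured disc: every element is represented by a homeomorphism, whose inverse yields the identities (\ref{eq:reg_v_n}) and (\ref{eq:inv}), and one writes $IBB_n = E\,BB_n$ with $E$ the idempotents generated by the $\epsilon_i$ and $BB_n$ playing the role of the group part. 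The main obstacle I anticipate is precisely the bookkeeping in the injectivity step: one must check that the single relation $\epsilon_1\tau=\epsilon_1$, together with its conjugates, genuinely suffices to remove $\tau$ from every deleted string in every configuration, so that no further relation between $\tau$ and the idempotents $\epsilon_i$ is required.
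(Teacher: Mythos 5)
Your strategy is indeed the one the paper intends: its own proof of this theorem is the single line ``The same as for $IB_n$,'' referring to the sketch given for $IB_n(S^2)$, and you have reproduced that scheme faithfully. However, the point you flag at the end as ``bookkeeping'' is not bookkeeping; it is a genuine gap, and with the relations exactly as listed it cannot be closed. Every listed relation tying $\tau$ to an idempotent is an absorption at the first string, $\epsilon_1\tau^{\pm1}=\tau^{\pm1}\epsilon_1=\epsilon_1$. These let you kill $\tau$ when the \emph{first} string is absent, but they never let you move $\tau$ past an idempotent $\epsilon_i$ with $i\geq 2$. The identity $\tau\epsilon_2=\epsilon_2\tau$ (equivalently $\epsilon_1(\sigma_1\tau\sigma_1)=(\sigma_1\tau\sigma_1)\epsilon_1$, since $\epsilon_2=\sigma_1\epsilon_1\sigma_1$) certainly holds in the geometric monoid $IBB_n$, because winding string $1$ round the pole is disjoint from deleting string $2$; and it is exactly what your reduction of the word $\epsilon_2\tau$ (or $\tau\epsilon_2$) to the normal form (\ref{eq:form_inv}) requires. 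But it is not a consequence of the stated relations.

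To see this concretely, take $n=2$, let $G$ be the braid group of type $B$ on two strings, and let $H\leq G$ be the subgroup of elements with trivial underlying permutation whose second string has winding number $0$ about the puncture; then $\tau,\sigma_1^{2}\in H$, while $\sigma_1^{-1}\tau\sigma_1\notin H$ (its second string winds once about the puncture). Form $Q=G\sqcup\bigl(G/H\times H\backslash G\bigr)\sqcup\{0\}$, where $G$ acts on the coset pairs by translation on either side, and two coset pairs multiply by $(aH,Hb)\cdot(cH,Hd)=(aH,\,H\beta^{\,w}d)$ with $\beta=\sigma_1\tau\sigma_1$ and $w$ the winding number of the second string of $bc$, provided the permutation of $bc$ fixes the second string, and $(aH,Hb)\cdot(cH,Hd)=0$ otherwise. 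One checks directly that $Q$ is an associative monoid and that every relation of the proposed presentation holds in $Q$ with $\epsilon_1\mapsto(H,H)$; yet the images $\tau\epsilon_2\mapsto(\tau\sigma_1H,H\sigma_1)$ and $\epsilon_2\tau\mapsto(\sigma_1H,H\sigma_1\tau)$ are distinct, precisely because $\sigma_1^{-1}\tau\sigma_1\notin H$. So the presented monoid maps onto $Q$, hence distinguishes $\tau\epsilon_2$ from $\epsilon_2\tau$, and therefore cannot be isomorphic to $IBB_2$: no reduction argument, yours or the paper's, can succeed as stated. The statement, and your proof of it, become correct once one also imposes $\epsilon_j\tau=\tau\epsilon_j$ for $j\geq2$, i.e.\ treats $\tau$ as a ``zeroth'' generator $\sigma_0$ in the commutation family of (\ref{eq:invbrelations2}); with these relations added, your absorption of $\tau$ into the middle factor, and with it the rest of your argument, goes through.
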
 
\begin{proof} The same as for $IB_n$. 
\end{proof}

\begin{Remark} 
Theorem~\ref{eq:IBB} can be easily generalised for partial braids in 
handlebodies \cite{Ve1}.
\end{Remark} 

The same way as for $IB_n$ the notion of Makanin braids can be defined for 
any surface and we get $Mak_n(S_g)\subset IB_n(S_g)$. The group of 
Makanin braids for the sphere was used in the exact 
sequence~(\ref{eq:maks2}). 

Let $BP_n$ be the braid-permutation group of R.~Fenn,
R.~Rim\'anyi and C.~Rourke \cite{FRR2}.
It is defined  as a subgroup of $\operatorname{Aut} F_n$, generated
by both sets of the automorphisms $\sigma_i$ of (\ref{eq:autf})
and  $\xi_i$ of the following form: 
\begin{equation}
\begin{cases} x_i &\mapsto x_{i+1}, \\ x_{i+1} &\mapsto
x_i,     \\ x_j &\mapsto x_j, j\not=i,i+1, \end{cases}
\label{eq:syminaut}
\end{equation}
 R.~Fenn,
R.~Rim\'anyi and C.~Rourke proved that 
this group
is given by the set of generators: $\{ \xi_i, \sigma_i, \ \ i=1,2,
..., n-1 \}$ and relations: 
$$ \begin{cases} \xi_i^2&=1, \\ \xi_i \xi_j
&=\xi_j \xi_i, \ \ \text {if} \ \ |i-j| >1,\\ \xi_i \xi_{i+1}
\xi_i &= \xi_{i+1} \xi_i \xi_{i+1}. \end{cases} $$ 
\vglue 0.01cm
\centerline { The symmetric group relations} 
$$ \begin{cases} \sigma_i \sigma_j &=\sigma_j
\sigma_i, \ \text {if} \  |i-j| >1,
\\ \sigma_i \sigma_{i+1} \sigma_i &= \sigma_{i+1} \sigma_i \sigma_{i+1}.
\end{cases} $$
\vglue 0.01cm
\centerline {The braid group relations } 
\begin{equation}
 \begin{cases} \sigma_i \xi_j
&=\xi_j \sigma_i, \ \text {if} \  |i-j| >1,
\\ \xi_i \xi_{i+1} \sigma_i &= \sigma_{i+1} \xi_i \xi_{i+1},
\\ \sigma_i \sigma_{i+1} \xi_i &= \xi_{i+1} \sigma_i \sigma_{i+1}.
\end{cases} \label{eq:mixperm}
\end{equation}
\vglue0.01cm
\centerline {The mixed relations for the braid-permutation group}
\smallskip

R.~Fenn, R.~Rim\'anyi and C.~Rourke also gave a geometric
interpretation of $BP_n$ as a group of {\it welded braids}.

We consider the image of monoid $I_n$ in $\operatorname{End} F_n$
by the map  defined by the formulas (\ref{eq:syminaut}), (\ref{eq:endf1}).
We take also the  monoid $IB_n$  lying in $\operatorname{End} F_n$
under the map $\phi_n$ of Theorem~(\ref{theo:isoendo}). We define the 
{\it braid-permutation} 
monoid as a submonoid of $\operatorname{End} F_n$ generated by both
images of $IB_n$ and $I_n$ and denote it by $IBP_n$. It can be also defined
by the diagrams of partial welded braids. 
\begin{Theorem} We get a presentation of the monoid $IBP_n$ if we add to
the presentation of  $BP_n$ the generator $\epsilon$, relations 
(\ref{eq:invbrelations})
and the analogous relations between
$\xi_i$ and $\epsilon$, or 
generators $\epsilon_i$, $ 1\leq i \leq n$
relations (\ref{eq:invbrelations2}) and the analogous relations between
$\xi_i$ and $\epsilon_i$. 
It is a factorisable inverse monoid.
\end{Theorem}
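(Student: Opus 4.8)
The plan is to follow the template already used for $IB_n(S^2)$ and $IBB_n$: denote by $M_n$ the abstract monoid defined by the presentation in the statement, construct a surjective homomorphism $\Psi\colon M_n \to IBP_n \subset \operatorname{End} F_n$, and then prove that $\Psi$ is injective by reducing every word to a normal form. First I would check that $\Psi$ is well defined, i.e. that all defining relations hold among the corresponding endomorphisms of $F_n$. The $BP_n$-relations hold because $BP_n$ is by construction the subgroup of $\operatorname{Aut} F_n$ generated by the $\sigma_i$ and $\xi_i$. The relations (\ref{eq:invbrelations}) (resp. (\ref{eq:invbrelations2})) between the $\sigma_i$ and $\epsilon$ (resp. the $\epsilon_i$) hold by Theorem~\ref{theo:isoendo}, since $\phi_n$ already embeds $IB_n$ into $EF_n \subset \operatorname{End} F_n$. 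The analogous relations between the $\xi_i$ and $\epsilon$ are exactly the Popova relations presenting the symmetric inverse monoid $I_n$, realised inside $\operatorname{End} F_n$ through (\ref{eq:syminaut}) and (\ref{eq:endf1}), so they hold as well. Surjectivity is immediate, since $IBP_n$ is by definition generated by the images of $IB_n$ and $I_n$, which are precisely the images of the listed generators.

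The heart of the argument is injectivity, obtained via a normal form. As in the proof of Theorem~\ref{theo:isoendo}, I would show that $M_n$ is, as a set, a disjoint union of copies of the braid–permutation group $BP_k$, one for each pair $(I_k,J_k)$ of input and output index sets, and that every word is congruent — using the braid and symmetric relations together with the $\epsilon$-relations — to the canonical form
$$\sigma_{i_1}\dots\sigma_{1}\dots\sigma_{i_k}\dots\sigma_{k}\,\epsilon_{k+1,n}\,x\,\epsilon_{k+1,n}\,\sigma_{k}\dots\sigma_{j_k}\dots\sigma_{1}\dots\sigma_{j_1}$$
analogous to (\ref{eq:form_inv}), but now with the middle factor $x$ lying in $BP_k$ rather than in $Br_k$. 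The only new feature relative to $IB_n$ is that $x$ may involve the welding generators $\xi_i$; the reduction still goes through because the $\xi_i$–$\epsilon$ relations permit one to push deletion operators past welded crossings exactly as the relations of (\ref{eq:invbrelations}) push them past ordinary crossings.

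Once the normal form is in hand, injectivity follows as in the proof of Theorem~\ref{Theorem:garnfm}: the outer factors $\sigma_{i_1}\dots\sigma_k$ and $\sigma_k\dots\sigma_{j_1}$ are uniquely determined by the element, so two elements with the same image under $\Psi$ share the same pair $(I_k,J_k)$, whence their middle factors $x_1,x_2\in BP_k$ have the same image in $\operatorname{End} F_k$; since $BP_k$ \emph{is} its faithful image in $\operatorname{Aut} F_k$, this forces $x_1=x_2$. Factorisability $IBP_n = E\,BP_n$ can then be read directly off the normal form, every element being an idempotent-flanked element of $BP_n$, and the inverse-monoid property follows exactly as for $\mathcal{I}\mathcal{M}_{g,b,n}$, each element being represented by a partial isomorphism whose ambient inverse supplies the element required by (\ref{eq:reg_v_n})–(\ref{eq:inv}). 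The main obstacle I anticipate is the bookkeeping in the reduction step: one must confirm that the (here unstated) $\xi_i$–$\epsilon$ relations, the $I_n$-analogue of (\ref{eq:invbrelations}), are genuinely sufficient to drive every mixed word in $\sigma_i$, $\xi_i$, $\epsilon$ into the normal form — that is, that the interaction of welded crossings with string deletions forces no relation beyond those already separately present in $IB_n$ and in $I_n$.
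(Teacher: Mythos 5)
Your proposal is correct and matches the paper's intended argument: the paper's entire proof here is the one line ``The same as for $BP_n$,'' deferring to the template it already used for $IB_n(S^2)$ and $IBB_n$ --- present the monoid abstractly, map it onto the submonoid of $\operatorname{End} F_n$, reduce every word to the form (\ref{eq:form_inv}) with middle factor in the group part, and invoke faithfulness of that group part, which here is the Fenn--Rim\'anyi--Rourke embedding $BP_k\subset \Aut F_k$. Your write-up is exactly this strategy fleshed out, including a correct identification of the only step requiring genuine work (that the $\xi_i$--$\epsilon$ relations suffice to push deletion generators into the normal form), which the paper itself leaves entirely to the reader.
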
 
\begin{proof} The same as for $BP_n$. 
\end{proof}

The virtual braids \cite{Ve8} can be defined by the plane diagrams with real 
and virtual crossings. The corresponding Reidemeister moves are the same as 
for the welded braids of the  braid-permutation group with one exception.
 The forbidden move
corresponds to the last mixed relation for the braid-permutation group.
This allows to define the partial virtual braids and the corresponding
monoid $IVB_n$. So the mixed relation for $IVB_n$ have the form:
\begin{equation} \begin{cases} \sigma_i \xi_j
&=\xi_j \sigma_i, \ \text {if} \  |i-j| >1,
\\ \xi_i \xi_{i+1} \sigma_i &= \sigma_{i+1} \xi_i \xi_{i+1}.
\end{cases} \label{eq:mixvir}
\end{equation}
\vglue0.01cm
\centerline {The mixed relations for virtual braids}
\smallskip

\begin{Theorem} We get a presentation of the monoid $IVB_n$ if we delete
the last mixed relation in the presentation of  $IBP_n$, that is 
replace the relations (\ref{eq:mixperm}) by (\ref{eq:mixvir})  
It is a factorisable inverse monoid. The canonical epimorphism 
$$ IVB_n\to IBP_n$$
is evidently defined.
\end{Theorem}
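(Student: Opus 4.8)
The plan is to follow verbatim the scheme already used for $IB_n$, $IB_n(S^2)$ and $IBP_n$, since the only novelty here is the deletion of one relation. Let me temporarily write $M_n$ for the monoid defined by the displayed presentation (the generators of $BP_n$ together with $\epsilon$, the relations (\ref{eq:invbrelations}), the analogous relations between $\xi_i$ and $\epsilon$, and the mixed relations (\ref{eq:mixvir})), and let $IVB_n$ denote the monoid of isotopy classes of partial virtual braids. First I would define the evident map $\Psi\colon M_n\to IVB_n$ sending each generator to the corresponding elementary partial virtual braid diagram and check that it respects the relations; surjectivity is immediate, since any partial virtual braid diagram is a product of the elementary diagrams attached to the generators.

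The substance of the proof is the injectivity of $\Psi$. Exactly as in \cite{EL}, I would use the relations governing the interaction of the idempotents $\epsilon_i$ with the crossings --- now with both $\sigma_i$ and $\xi_i$ --- to reduce an arbitrary word to the form (\ref{eq:form_inv}),
\begin{equation*}
\sigma(i_1,\dots,i_k;k)\,\epsilon_{k+1,n}\,x\,\epsilon_{k+1,n}\,\sigma(k;j_1,\dots,j_k),
\end{equation*}
where now the middle factor $x$ is a word on $\sigma_1,\dots,\sigma_{k-1},\xi_1,\dots,\xi_{k-1}$, i.e.\ an element of the virtual braid group $VB_k$. The two marker fragments $\sigma(i_1,\dots,i_k;k)$ and $\sigma(k;j_1,\dots,j_k)$ are determined by the underlying partial permutation, hence by the diagram. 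If two words have the same image under $\Psi$, their marker fragments coincide and their middle factors represent isotopic virtual braids on $k$ strands; by the presentation of $VB_n$ from \cite{Ve8} the two middle factors are then equal in $VB_k$, so the original words are equal in $M_n$.

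The one point that must be checked, and where the whole difference from $IBP_n$ lives, is that this reduction never invokes the deleted relation. The forbidden move $\sigma_i\sigma_{i+1}\xi_i=\xi_{i+1}\sigma_i\sigma_{i+1}$ is a relation purely among the $\sigma$'s and $\xi$'s and does not involve $\epsilon$, so every step that pushes an idempotent past a (real or virtual) crossing is identical to the $IBP_n$ case; the sole effect of its absence is that the middle factors must be compared in $VB_k$ rather than in $BP_k$. This is exactly what the word problem for $VB_n$ supplies, so the argument transports unchanged. I expect this bookkeeping --- the reduction to (\ref{eq:form_inv}) in the presence of virtual crossings, together with the verification that the forbidden move is genuinely unnecessary for it --- to be the main obstacle, rather than any single hard computation.

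Factorisability and the inverse property I would dispose of as before: writing $E$ for the set of idempotents generated by the $\epsilon_i$ and $VB_n$ for the virtual braid group sitting inside $IVB_n$, one has $IVB_n=E\,VB_n$, and each element, being represented by a partial virtual braid, admits the geometric reverse as a Wagner inverse satisfying (\ref{eq:reg_v_n}) and (\ref{eq:inv}). Finally the epimorphism $IVB_n\to IBP_n$ is defined on generators as the identity on $\sigma_i$, $\xi_i$ and $\epsilon$; it is well defined because every defining relation of $IVB_n$ --- in particular the two relations (\ref{eq:mixvir}) --- is among the defining relations of $IBP_n$, and it is surjective since its image contains all the generators of $IBP_n$.
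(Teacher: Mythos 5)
Your proposal is correct and takes essentially the same approach as the paper: the paper states this theorem without a separate proof, implicitly invoking the scheme used for $IB_n(S^2)$, $IBB_n$ and $IBP_n$ (define the evident surjection $\Psi$, reduce words to the form (\ref{eq:form_inv}) by pushing idempotents past crossings, and compare the middle factors via the word problem in the underlying group, here $VB_k$). Your additional observation that the reduction never uses the forbidden move --- since that relation involves no $\epsilon$'s while every idempotent-pushing step does --- is precisely the point that makes the transported argument legitimate, and your treatment of factorisability, the inverse property, and the epimorphism $IVB_n\to IBP_n$ matches the paper's.
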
 

The {\it singular braid monoid} $SB_n$  or {\it Baez--Birman monoid} 
\cite{Bae}, \cite{Bir2} is defined as a monoid with generators 
$\sigma_i, \sigma_i^{-1}, x_i$, $i=1,\dots,n-1,$ and relations
\begin{equation}
\begin{cases}
&\sigma_i\sigma_j=\sigma_j\sigma_i, \ \text {if} \ \ |i-j| >1,\\
&x_ix_j=x_jx_i, \ \text {if} \ \ |i-j| >1,\\
&x_i\sigma_j=\sigma_j x_i, \
\text {if} \ \ |i-j| \not=1,\\
&\sigma_i \sigma_{i+1} \sigma_i = \sigma_{i+1} \sigma_i
\sigma_{i+1},\\
&\sigma_i \sigma_{i+1} x_i = x_{i+1} \sigma_i \sigma_{i+1},\\
&\sigma_{i+1} \sigma_ix_{i+1} = x_i \sigma_{i+1} \sigma_i,\\
&\sigma_i\sigma_i^{-1}=\sigma_i^{-1}\sigma_i =1.
\end{cases}
\label{eq:singrel}
\end{equation}
In pictures $\sigma_i$ corresponds to the canonical generator of 
the braid group and $x_i$ represents an intersection
of the $i$th and $(i+1)$st strand as in
Figure~\ref{fi:singene}.
\begin{figure}
\begin{picture}(0,130)(0,-10) 
\thicklines
\put(0,50){\circle*{5}} \put(-100,100){\line(0,-1){100}}
\put(-50,100){\line(0,-1){100}} \put(-25,100){\line(1,-2){50}}
\put(25,100){\line(-1,-2){50}} \put(50,100){\line(0,-1){100}}
\put(100,100){\line(0,-1){100}}
\put(-100,110){\makebox(0,0)[cc]{$1$}}
\put(-50,110){\makebox(0,0)[cc]{$i-1$}}
\put(-25,110){\makebox(0,0)[cc]{$i$}}
\put(25,110){\makebox(0,0)[cc]{$i+1$}}
\put(50,110){\makebox(0,0)[cc]{$i+2$}}
\put(100,110){\makebox(0,0)[cc]{$n$}}
\put(-75,50){\makebox(0,0)[cc]{.\quad.\quad.}}
\put(75,50){\makebox(0,0)[cc]{.\quad.\quad.}}
\end{picture}
\caption{}\label{fi:singene}
\end{figure}
The singular braid monoid on two strings is isomorphic to $\Z\oplus\Z^+$.
The constructions of $SB_n$ is geometric, so we can easily get the analogous
monoid of partial singular braids $PSB_n$.
\begin{Theorem} We get a presentation of the monoid $PSB_n$ if we add to
the presentation of  $SB_n$ the  generators $\epsilon_i$, $ 1\leq i \leq n$,
relations (\ref{eq:invbrelations2}) and the analogous relations between
$x_i$ and $\epsilon_i$. 
\end{Theorem}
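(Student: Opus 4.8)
The plan is to follow the same two-step scheme used above for $IB_n(S^2)$ and the other geometric generalisations. Write $M_n$ for the monoid defined by the stated presentation — the generators of $SB_n$ together with $\epsilon_i$, $1\le i\le n$, the relations (\ref{eq:singrel}), the relations (\ref{eq:invbrelations2}), and the analogous relations between the singular generators $x_i$ and the idempotents $\epsilon_i$ — and write $PSB_n$ for the geometric monoid of isotopy classes of partial singular braids. Since all the imposed relations hold between the corresponding diagrams, there is an evident homomorphism $\Psi\colon M_n\to PSB_n$, and the whole statement amounts to proving that $\Psi$ is an isomorphism.

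First I would establish surjectivity together with a normal form. As in Easdown--Lavers, I claim the added relations push every word of $M_n$ into the shape (\ref{eq:form_inv}), where now the central factor $x$ is a word in $\sigma_1,\dots,\sigma_{k-1}$ and $x_1,\dots,x_{k-1}$, i.e. an element of the singular braid monoid $SB_k$ on the $k$ surviving strings. The braid-type relations of (\ref{eq:invbrelations2}) slide each $\epsilon_j$ past the $\sigma_i$ exactly as before, and the analogous relations between $x_i$ and $\epsilon_j$ do the same for the singular generators; geometrically, deleting a strand that enters a singular double point destroys that crossing, which is precisely what those new relations encode and what lets one collect all deletions into the two markers $\epsilon_{k+1,n}$. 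Every geometric partial singular braid visibly has a representative of this shape, so $\Psi$ is onto.

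The main work is injectivity. Suppose $\Psi(W_1)=\Psi(W_2)$, so the two partial singular braids are isotopic. Reducing $W_1,W_2$ to the form above, the outer fragments $\sigma_{i_1}\cdots\sigma_1\cdots\sigma_{i_k}\cdots\sigma_k$ and $\sigma_k\cdots\sigma_{j_k}\cdots\sigma_1\cdots\sigma_{j_1}$ are determined by the index sets of surviving strings and by the induced partial permutation, so they agree for $W_1$ and $W_2$ (the same uniqueness used in Theorem~\ref{Theorem:garnfm}). Cancelling them, the central factors of $W_1$ and $W_2$, say $x'$ and $x''$, lie in $SB_k$ and are carried by $\Psi$ to isotopic singular braids on $k$ strings; hence $x'=x''$ in $SB_k$. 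Lifting this equality back through the relations of $M_n$ yields $W_1=W_2$, so $\Psi$ is a monomorphism and therefore an isomorphism.

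I expect the only genuine obstacle to be the first step: checking that the \emph{analogous relations between $x_i$ and $\epsilon_i$} are strong enough both to slide the idempotents past the singular generators and to collapse a singular crossing one of whose strands has been deleted. Once the Easdown--Lavers normal form is available in the singular setting, the reduction of the word problem to that of $SB_k$ is routine, just as the sphere and type-$B$ cases reduced to $Br_k(S^2)$ and $Br_k$. The one structural difference is that $SB_k$ is only a monoid, its generators $x_i$ being non-invertible; but this causes no trouble, since the argument never inverts the $x_i$ and uses only the presentation (\ref{eq:singrel}) to identify isotopic central factors.
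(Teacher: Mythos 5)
Your proof is correct and takes essentially the same route as the paper: the paper's own proof is only the phrase ``the same as for $BP_n$'', i.e.\ the Easdown--Lavers scheme spelled out for $IB_n(S^2)$ --- an evident epimorphism $\Psi$ from the presented monoid, reduction of every word to the form (\ref{eq:form_inv}) with central factor in the core monoid (here $SB_k$), uniqueness of the outer fragments, and reduction of injectivity to the word problem in that core monoid --- which is exactly what you do. Your explicit remark that the ``analogous relations'' between $x_i$ and $\epsilon_i$ must include the collapse of a singular crossing one of whose strands is deleted (mere sliding relations would not suffice, since the number of singular points would otherwise be an invariant separating $\epsilon_i x_i$ from $\epsilon_i\sigma_i$) is the correct reading of a point the paper leaves implicit.
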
 
\begin{proof} The same as for $BP_n$. 
\end{proof}
\begin{Remark} The monoid $PSB_n$  is not neither factorisable nor  inverse.
\end{Remark}

The construction of braid groups on graphs \cite{Ghr}, \cite{FarS} is geometrical
so, the same way as for the classical braid groups we can define {\emph {partial
braids on a graph }}$\Gamma$ and the {\emph {monoid of partial braids on a graph}}
$\Gamma$ which will be evidently inverse, so we call it as
{\emph {inverse braid monoid on the graph}} $\Gamma$ and we denote it as
$IB_n\Gamma$.

\section{Partial braids and braided monoidal categories\label{sec:cat}}

The system of braid groups $Br_n$ is equipped 
 with the standard pairings 
$$ \mu : Br_k \times Br_l \rightarrow Br_{k+l}. $$
It may be constructed by means of adding $l$ extra
strings to the initial $k$. If $ \sigma_i^\prime $ are the generators of
$Br_k $, $\sigma_j^{\prime\prime}$ are the generators of $Br_l$ and
$\sigma_r$ are the generators of $Br_{k+l}$, then the map $\mu$ can be
expressed in the form
$$ \mu (\sigma_i^\prime, e) = \sigma_i, \  1 \leq i \leq k-1, $$
$$\mu (e, \sigma_j^{\prime\prime}) = \sigma_{j+k}, \ 1 \leq j \leq l-1.$$
The same geometric construction allows to extend this pairing to a pairing for the
inverse braid monoids.
$$ \mu : IB_k \times IB_l \rightarrow IB_{k+l} $$
such that the following diagram commutes
\begin{equation}
\begin{CD}
Br_k \times Br_l @>\mu >>Br_{k+l}   \\
@VV\kappa V  @VV\kappa V \\
IB_k \times IB_l @>\mu >>IB_{k+l}.
\label{eq:comdpar} \end{CD}
\end{equation}
The vertical lines denote here the canonical inclusions.
For the generators $\epsilon_i$ we have:
$$ \mu (\epsilon_i^\prime, e) = \epsilon_i, \  1 \leq i \leq k, $$
$$\mu (e, \epsilon_j^{\prime\prime}) = \epsilon_{j+k}, \ 1 \leq j \leq l.$$

A strict monoidal (tensor) category
$\mathcal B$ is defined in a standard way. Its objects
$\{\overline 0, \overline 1, ...\}$
correspond to integers from $0$ to infinity and morphisms are defined by the
formula:
\begin{equation}
\operatorname{hom} (\overline k,\overline l)=\begin{cases} Br_k, \ \  &\text{if}
\quad  k=l,\\
\varnothing, \ \  &\text{if} \quad k\not=l. 
\end{cases} 
\end{equation}
The product in $\mathcal B$ is defined on objects by the sum of numbers and on
morphisms, by the pairing $\mu$. 
The category $\mathcal B$, generated by the braid groups, is a
{\it braided monoidal category} as defined by A.~Joyal and R.~Street \cite{JS}.

The following system of elements
$$\sigma_m \dots \sigma_{1}\sigma_{m+1}\dots\sigma_2\dots \sigma_{n+m-1}\dots
\sigma_n \in Br_{m+n}$$
defines a braiding $c$ in $\mathcal B$. Graphically it is depicted in
Figure~\ref{fi:braidin}. 
\begin{figure}
\begin{picture}(0,140)(0,-15)
\thicklines
\put(-50,100){\line(0,-1){50}}
\put(-25,100){\line(0,-1){25}}
\put(0,100){\line(1,-1){50}}
\put(25,100){\line(-1,-1){10}}
\put(50,75){\line(-1,-1){10}}
\put(50,100){\line(0,-1){25}}
\put(-50,50){\line(1,-1){50}}
\put(-25,75){\line(1,-1){50}}
\put(0,100){\line(1,-1){50}}
\put(50,50){\line(0,-1){50}}
\put(-50,25){\line(0,-1){25}}
\put(-35,40){\line(1,1){20}}
\put(25,25){\line(0,-1){25}}
\put(-10,65){\line(1,1){19}}
\put(15,40){\line(1,1){19}}
\put(-50,25){\line(1,1){10}}
\put(-25,0){\line(1,1){10}}
\put(-10,15){\line(1,1){19}}
\put(-25,110){\makebox(0,0)[cc]{$\overbrace{\quad \ \ \ \ \ \ \ \ \ \ \ \ \
\ \ }$}}
\put(-25,120){\makebox(0,0)[cc]{$m$}}
\put(37,110){\makebox(0,0)[cc]{$\overbrace{ \ \ \ \ \ \ \ \ \ \ }$}}
\put(37,120){\makebox(0,0)[cc]{$n$}}
\end{picture}
\vglue-0.3 cm
\caption{}\label{fi:braidin}
\end{figure}

The same way we define a strict monoidal category $\mathcal{IB}$ with the same
objects as for $\mathcal B$ and morphisms
\begin{equation}
\operatorname{hom} (\overline k,\overline l)=\begin{cases} IB_k, \ \  &\text{if}
\quad  k=l,\\
\varnothing, \ \  &\text{if} \quad k\not=l. 
\end{cases} 
\end{equation}

The canonical inclusions 
\begin{equation}
\kappa_n: Br_n\to IB_n
\end{equation}
define a functor 
$${\mathcal K}: \mathcal B\to{\mathcal{IB}}.$$

The image of the braiding $c$ by the functor
${\mathcal K}: \mathcal B\to{\mathcal{IB}}$
is a braiding in the category ${\mathcal{IB}}$. 

Geometrically the fact that the braiding for $\mathcal B$ defines also a braiding 
for partial braids is easily seen from Figure~\ref{fi:braidin}.
\begin{Proposition} The image of the braiding $c$ in the category
$\mathcal B$ by the functor $\mathcal K$ is a
braiding in the category $\mathcal{IB}$, so it becomes a braided 
monoidal category and
the functor $\mathcal K$ becomes a~morphism between the braided 
monoidal categories. 
\end{Proposition}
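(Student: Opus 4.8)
The plan is to verify directly that $\bar c := \mathcal K(c)$, the family with components $\bar c_{\overline m,\overline n} = \kappa_{m+n}(c_{\overline m,\overline n}) \in IB_{m+n}$, satisfies the three defining conditions of a braiding in $\mathcal{IB}$: invertibility of each component, the two hexagon identities, and naturality with respect to \emph{all} morphisms of $\mathcal{IB}$. Since $\mathcal K$ is by construction a strict monoidal functor — it is the identity on objects and applies the monoid homomorphisms $\kappa_n$ to morphisms, while the commuting square (\ref{eq:comdpar}) says precisely that $\kappa$ intertwines the two products $\mu$ — once $\bar c$ is shown to be a braiding the final clause is immediate: a braided monoidal functor is a monoidal functor carrying the source braiding to the target one, and here $\mathcal K(c) = \bar c$ by definition.

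Invertibility and the hexagon identities I expect to be formal. Each $c_{\overline m,\overline n}$ is an honest braid, hence invertible in $Br_{m+n}$, so its image under the homomorphism $\kappa_{m+n}$ is invertible in $IB_{m+n}$. The two hexagon identities hold in $\mathcal B$ as equalities of elements of the groups $Br_\bullet$; applying the $\kappa$ and using that $\kappa$ intertwines $\mu$ (diagram (\ref{eq:comdpar})) transports each such equality verbatim into $\mathcal{IB}$. Thus no new computation is needed for these axioms — they are inherited through $\mathcal K$.

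The substantive point, and the step I expect to be the main obstacle, is naturality, because in $\mathcal{IB}$ the naturality squares must be checked against all morphisms, including genuinely partial braids, whereas in $\mathcal B$ only invertible morphisms occur. Concretely, for $f\in IB_m$ and $g\in IB_n$ one must establish in $IB_{m+n}$ the identity
$$\bar c_{\overline m,\overline n}\,\mu(f,g) = \mu(g,f)\,\bar c_{\overline m,\overline n}.$$
Because $\mu$ is a bifunctor, naturality is closed under composition in $f$ and $g$ and factors through $\mu(f,g)=\mu(f,e)\,\mu(e,g)$; hence it suffices to verify the displayed identity on generators. For the braid generators $\sigma_i^{\pm 1}$ it is inherited from the naturality of $c$ in $\mathcal B$, so — using that $IB_m$ is generated as a monoid by $Br_m$ together with the idempotents $\epsilon_i$ — the only new content is the case where one of $f,g$ is an idempotent generator $\epsilon_i$ and the other is $e$. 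This reduces to a relation purely inside $IB_{m+n}$: the block half-twist $\bar c$ carries the string-deletion in position $i$ of one block to the string-deletion in the corresponding position of the other block after the swap. I would check this either geometrically, reading it off Figure~\ref{fi:braidin}, or algebraically by pushing $\mu(\epsilon_i,e)$ across the explicit word for $\bar c$ using the relations $\epsilon_j\sigma_i=\sigma_i\epsilon_j$ for $|i-j|>1$ together with $\epsilon_i\sigma_i=\sigma_i\epsilon_{i+1}$ and $\epsilon_{i+1}\sigma_i=\sigma_i\epsilon_i$ from (\ref{eq:invbrelations2}); these slide the idempotent through the half-twist exactly into the position $\mu(e,\epsilon_i)$ on the other side.

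Putting these together, $\bar c$ is a natural isomorphism satisfying the hexagon identities, hence a braiding on $\mathcal{IB}$, which thereby becomes a braided monoidal category; and as noted $\mathcal K$ then carries the braiding of $\mathcal B$ to that of $\mathcal{IB}$, so it is a morphism of braided monoidal categories. The only genuine work is the generator-by-generator check of naturality against the idempotents $\epsilon_i$; everything else is transported through the strict monoidal functor $\mathcal K$.
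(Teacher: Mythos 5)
Your proposal is correct and takes essentially the same route as the paper: naturality is reduced to the idempotent generators $\epsilon_i$ (naturality for braid generators and all coherence conditions being inherited from $\mathcal B$ through the strict monoidal functor $\mathcal K$), and the idempotents are slid through the explicit braiding word using exactly the relations $\epsilon_i\sigma_i=\sigma_i\epsilon_{i+1}$ and $\epsilon_{i+1}\sigma_i=\sigma_i\epsilon_i$ from (\ref{eq:invbrelations2}). The paper simply carries out in full the conjugation computation you outline, obtaining $\epsilon_i\mapsto\epsilon_{i-n}$ for $i>n$ and $\epsilon_i\mapsto\epsilon_{i+m}$ for $i<n$.
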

\begin{proof} By definition, the naturality of the braiding
$\mathcal K(c)$ (which we denote by the same symbol $c$)
means that the following equality
$$c_{\overline m,\overline n}\cdot\mu(b_m^\prime,b_n^{\prime\prime})=
\mu(b_n^{\prime\prime},b_m^\prime)\cdot c_{\overline m,\overline n}, \ \
b_m^\prime \in Br_m, \ \ b_n^{\prime\prime}\in Br_n,$$
is fulfilled. This amounts to the expression
$$c_{\overline m,\overline n}\cdot\mu(b_m^\prime,b_n^{\prime\prime})
\cdot c_{\overline m,\overline n}^{-1}=\mu(b_n^{\prime\prime},b_m^\prime),$$
which means that the
conjugation by the element $c_{\overline m,\overline n}$ transforms the
elements of $IB_m\times IB_n$, lying canonically in
$IB_{m+n}$, into the corresponding elements of $IB_n\times IB_m$.
The elements $c_{\overline m,\overline n}$ define a braiding for the
category $\mathcal B$, so, for checking the naturality of $c$ in
$\mathcal {IB}$
it remains to verify the naturality for the generators
$\epsilon_i,\ 1\leq i\leq m, \ m+1 \leq i \leq m+n$. Let us consider
the corresponding conjugation:
$$\sigma_m \dots \sigma_{1}\sigma_{m+1}\dots \sigma_2\dots \sigma_{n+m-1}\dots
\sigma_n \epsilon_i \sigma_n^{-1} \dots \sigma_{n+m-1}^{-1}\dots 
\sigma_{2}^{-1}\sigma_{m+1}^{-1}
\sigma_1^{-1}\dots \sigma_m^{-1}.$$
When $i>n$, we move $\epsilon_i$ back, using the relation
$$\epsilon_i\sigma_{i} = \sigma_i\epsilon_{i+1}.$$
We have:
$$\sigma_m ... \sigma_{1}\sigma_{m+1}...\sigma_2... \sigma_{n+m-1}...
\sigma_n \epsilon_i \sigma_n^{-1} ... \sigma_{n+m-1}^{-1}...
\sigma_{2}^{-1}\sigma_{m+1}^{-1}
\sigma_1^{-1}...\sigma_m^{-1}=$$
$$\sigma_m ... \sigma_{1}\sigma_{m+1}...\sigma_2... \sigma_{n+m-1}... 
\sigma_{i+1} \sigma_i \epsilon_{i-1}
\sigma_{i-1}\sigma_{i-1}^{-1} ... \sigma_{n+m-1}^{-1}...\sigma_{2}^{-1}\sigma_{m+1}^{-1}
\sigma_1^{-1}...\sigma_m^{-1}=$$
$$...=\epsilon_{i-n}.$$
When $i<n$, we move $\epsilon_i$ back using the relation
$$\epsilon_{i+1}\sigma_{i}=\sigma_{i}\epsilon_{i}.$$
We have:
$$\sigma_m ... \sigma_{1}\sigma_{m+1}...\sigma_2... \sigma_{n+m-1}...
\sigma_n \epsilon_i \sigma_n^{-1} ... \sigma_{n+m-1}^{-1}...
\sigma_{2}^{-1}\sigma_{m+1}^{-1}
\sigma_1^{-1}...\sigma_m^{-1}=$$
$$\sigma_m ... \sigma_{1}\sigma_{m+1}...\sigma_2... 
\sigma_i\sigma_{i+m-1}...\sigma_{i+2}
\sigma_{i+1}\sigma_i \epsilon_i \sigma_i^{-1}\sigma_{i+1}^{-1} ... 
\sigma_{i+m-1}^{-1}...
\sigma_{2}^{-1}\sigma_{m+1}^{-1}
\sigma_1^{-1}...\sigma_m^{-1}=$$
$$\sigma_m ... \sigma_{1}\sigma_{m+1}...\sigma_2... 
\sigma_i\sigma_{i+m-1}...\sigma_{i+2}
\sigma_{i+1} \epsilon_{i+1} \sigma_{i+1}^{-1} ... \sigma_{i+m-1}^{-1}\sigma_i^{-1}...
\sigma_{2}^{-1}\sigma_{m+1}^{-1}
\sigma_1^{-1}...\sigma_m^{-1}=$$
$$=\epsilon_{i+m}.$$
The conditions of coherence are fulfilled because they are true for $\mathcal B$.
\end{proof}

Let $BIB$  denote the classifying spaces of the limit inverse braid
monoid. As usual, the pairings
$\mu_{m,n}$ define a monoid structure on the disjoint sum of the classifying
spaces of $IB_n$:
$$\amalg_{n\geq 0} BIB_n.$$
\begin{Proposition} The canonical maps
$$BIB\to\Omega B(\amalg_{n\geq 0} BIB_n)$$
induce isomorphisms in homology
$$H_*(BIB;A)\to H_*((\Omega B(\amalg_{n\geq 0} BIB_n))_0; A),$$
with any (constant) coefficients. So,
$$BIB^+\cong (\Omega B(\amalg_{n\geq 0} BIB_n))_0 .$$
\end{Proposition}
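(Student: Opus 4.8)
The plan is to deduce this from the group-completion theorem of Quillen and of McDuff--Segal, exactly as one proves the Barratt--Priddy--Quillen theorem $B\Sigma_\infty^+\simeq\Omega^\infty_0 S^\infty$ and its braided analogue for the groups $Br_n$. Set $\mathcal M=\amalg_{n\geq 0}BIB_n$ with the product induced by the pairings $\mu_{m,n}$. The preceding Proposition shows that $\mu$ is a braiding, so $\mathcal M$ is a homotopy-commutative topological monoid; this is precisely the hypothesis under which the group-completion theorem applies to $\mathcal M$.

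First I would record the structure of $\mathcal M$ on homology. The Pontryagin ring $H_*(\mathcal M;A)=\bigoplus_{n\geq 0}H_*(BIB_n;A)$ is graded by $\pi_0(\mathcal M)=\N$, and multiplication by the generator $t\in H_0(BIB_1;A)$ is induced by the stabilisation map $\kappa\colon BIB_n\to BIB_{n+1}$ that adds one trivial string. The group-completion theorem then gives a homology isomorphism
$$H_*\bigl((\Omega B\mathcal M)_0;A\bigr)\;\cong\;\bigl(\textstyle\bigoplus_n H_*(BIB_n;A)\bigr)[t^{-1}].$$
Because $\mathcal M$ is homotopy commutative and $t$ lies in degree $0$, the element $t$ is central in the Pontryagin ring, so localising at $t$ is just passing to the colimit along the stabilisation maps:
$$\bigl(\textstyle\bigoplus_n H_*(BIB_n;A)\bigr)[t^{-1}]\;\cong\;\operatorname*{colim}_n H_*(BIB_n;A).$$
Since geometric realisation of the nerve commutes with filtered colimits we have $BIB=\operatorname*{colim}_n BIB_n$, hence the right-hand side is $H_*(BIB;A)$; and the canonical map $BIB\to(\Omega B\mathcal M)_0$ is exactly the map realising this identification. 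This proves the first two displayed assertions for arbitrary constant coefficients $A$.

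For the last statement I would argue that the target is its own plus-construction input. The space $(\Omega B\mathcal M)_0$ is a connected component of a loop space, hence an H-space and in particular a simple space. The map $f\colon BIB\to(\Omega B\mathcal M)_0$ just constructed is an isomorphism on homology with every constant coefficient group, so by Quillen's characterisation of the plus construction $f$ exhibits $(\Omega B\mathcal M)_0$ as $BIB^+$, the plus construction taken with respect to $P=\ker\bigl(\pi_1(BIB)\to\pi_1((\Omega B\mathcal M)_0)\bigr)$. I expect the one genuinely delicate point to be the identification of $P$ with the maximal perfect subgroup of $\pi_1(BIB)$: one must check that, in the stable group completion of the monoid $IB$, the relevant commutator subgroup is perfect, so that $P$ is perfect and $f$ is acyclic rather than merely a homology equivalence. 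Everything else is a formal application of the group-completion machinery, the remaining care being only to verify that $\mathcal M$ meets the precise homotopy-commutativity (or homology-fibration) hypotheses in whichever version of the theorem one cites.
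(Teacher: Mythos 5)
Your proposal is correct and follows essentially the same route as the paper: the paper's entire proof is a citation of the group-completion theorem (Theorem 3.2.1 and Corollary 3.2.2 of Adams' \emph{Infinite Loop Spaces}, or equivalently May's paper), together with the observation that the braiding $c$ from the preceding Proposition supplies the homotopy commutativity of the $H$-space $\amalg_{n\geq 0} BIB_n$ --- exactly the two ingredients on which your argument rests. The details you spell out (localization at $t$ equals the colimit along stabilization, and the plus-construction step with its perfectness caveat) are precisely what the cited sources are being invoked to provide, so there is no genuine divergence in approach.
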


The proof is the same as that of Theorem 3.2.1 and Corollary 3.2.2 in \cite{Ad2}
or (which is essentially the same) based directly on \cite{May2}. The braiding $c$
gives the necessary homotopy commutativity for the $H$-spaces
$\amalg_{n\geq 0} BIB_n$\,.

\begin{Theorem} The homomorphisms $\kappa_n$  induce morphisms
of braided monoidal categories
$$\mathcal B\buildrel\mathcal K\over\longrightarrow\mathcal{IB}$$
and the corresponding double loop maps
$$\Omega^2S^2\longrightarrow\Omega B(\amalg_{n\geq 0} BIB_n).$$
\end{Theorem}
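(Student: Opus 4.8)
The plan is to obtain the statement as a formal consequence of the recognition principle for two-fold loop spaces in the braided setting: the classifying space of a braided monoidal category, after group completion, carries a natural double loop space structure, and a morphism of braided monoidal categories induces a double loop map. The first assertion of the theorem---that the $\kappa_n$ assemble into a morphism $\mathcal K\colon\mathcal B\to\mathcal{IB}$ of braided monoidal categories---was already established in the preceding Proposition, so the real task is to produce the map on double loop spaces.

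First I would invoke Fiedorowicz's theorem that for a braided monoidal category $\mathcal C$ the disjoint union of classifying spaces $\amalg_n B\mathcal C_n$ is an $E_2$-space, that its group completion $\Omega B(\amalg_n B\mathcal C_n)$ is therefore a double loop space, and that a braided monoidal functor induces a map of $E_2$-spaces, hence on group completions a double loop map. Applied to $\mathcal B$ this exhibits $\Omega B(\amalg_n BBr_n)$ as a double loop space. Since $\mathcal B$ is the free braided monoidal category on a single object, $\amalg_n BBr_n$ is the free $E_2$-space on one point (through the unordered configuration space model $BBr_n\simeq C(\R^2,n)/\Sigma_n$), and its group completion is the free grouplike double loop space on one point, namely $\Omega^2 S^2$. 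Thus $\Omega B(\amalg_n BBr_n)\simeq\Omega^2 S^2$, equivalently $BBr_\infty^+\simeq\Omega^2_0 S^2$.

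Next I would run the same machine on $\mathcal{IB}$, so that $\Omega B(\amalg_{n\geq 0}BIB_n)$ is a double loop space, whose zero component is $BIB^+$ by the Proposition above. Functoriality of the construction applied to $\mathcal K$ then gives a double loop map
$$\Omega B(\amalg_n BBr_n)\longrightarrow\Omega B(\amalg_{n\geq 0}BIB_n),$$
and precomposing with the identification $\Omega^2 S^2\simeq\Omega B(\amalg_n BBr_n)$ yields the asserted map $\Omega^2 S^2\to\Omega B(\amalg_{n\geq 0}BIB_n)$.

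The step I expect to be the main obstacle is the clean invocation of the recognition principle: one must pass from the purely categorical data (the strict monoidal product together with the braiding $c$) to an honest action of the little $2$-cubes (or $E_2$) operad, and check that $\mathcal K$ respects that action rather than merely the monoidal and braiding structure. This is exactly the content of Fiedorowicz's comparison between braided monoidal categories and the $E_2$-operad; granting it, the compatibility of $\mathcal K$ is automatic, since all coherence data of $\mathcal{IB}$ (the unit, the pairing $\mu$, and the braiding, via the conjugation formulas $\epsilon_i\mapsto\epsilon_{i-n}$ and $\epsilon_i\mapsto\epsilon_{i+m}$ computed in the previous Proposition) are the images under $\mathcal K$ of the corresponding data in $\mathcal B$.
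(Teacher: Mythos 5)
Your proposal is correct and follows essentially the same route as the paper: the paper's entire proof is the one-line invocation of the recognition principle, namely that the classifying space of a braided monoidal category becomes a double loop space after group completion, with the morphism $\mathcal K$ of braided monoidal categories (established in the preceding Proposition) then inducing the double loop map. Your write-up merely makes explicit what the paper leaves implicit --- the attribution to Fiedorowicz's $E_2$-comparison and the classical identification $\Omega B(\amalg_n BBr_n)\simeq\Omega^2S^2$ via configuration spaces --- so it is a faithful, more detailed rendering of the same argument.
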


The proof follows from the fact that the classifying space of a
braided monoidal category is a double loop space after group completion.

\end{document}